\newcommand\ljr{l_{jr}}
\newcommand\njr{\mathbf{n}_{jr}}
\newcommand\si{\sigma}
\newcommand\eps{\varepsilon}
\newcommand{\id}{\widehat{I}_d}
\newcommand{\U}{\mathbf{U}}
\newcommand{\dx}{\partial_x}
\newcommand{\dy}{\partial_y}
\newcommand{\dt}{\partial_t}
\newcommand{\ds}{\displaystyle}
\newcommand\alj{\widehat{\alpha}_{jr}}
\newcommand\uj{\mathbf{u}_j}
\newtheorem{theorem}{Theorem}[section]
\newtheorem{lemma}[theorem]{Lemma}
\newtheorem{proposition}[theorem]{Proposition}
\newtheorem{algo}[theorem]{Algorithm}
\newtheorem{definition}{Definition}[section]
\newtheorem{remark}[theorem]{Remark}
\newenvironment{pr oof}[1][Proof]{\begin{trivlist}
\item[\hskip \labelsep {\bfseries #1}]}{\end{trivlist}}
\newtheorem{regle}{Problem}
{\begin{regle}#1~\noindent\begin{tabular}{||l}
\begin{minipage}[h]{13cm} \rm} %
{\end{minipage}  \end{tabular}\end{regle}}
\begin{document}

\title{Asymptotic
 preserving schemes on distorted meshes for Friedrichs systems with stiff relaxation: application to  angular models in linear transport.}

\author{Christophe Buet\thanks{CEA, DAM, DIF, F-91297 Arpajon Cedex}, Bruno Despr\'es\thanks{Laboratoire Jacques-Louis Lions,
  Universit\'e Pierre et Marie Curie,
  75252 Paris Cedex 05,
  France}  \& Emmanuel
 Franck\thanks{Max Planck Institute for Plasma Physics, Boltzmannstrasse 2
D-85748 Garching  }
}

\maketitle



\begin{abstract}
 In this paper we propose an asymptotic preserving scheme for a family of Friedrichs systems on unstructured meshes based on a decomposition between the hyperbolic heat equation and a linear hyperbolic which not involved in the diffusive regime. For the hyperbolic heat equation we use asymptotic preserving schemes recently designed in \cite{cemracs}-\cite{glaceap}. To discretize the second part we use classical Rusanov or upwind schemes. To finish we apply this method for the discretization of the $P_N$ and $S_N$ models which are widely used in  transport codes.
\end{abstract}

\tableofcontents

\section{Introduction}
We study the  finite volume discretization of general linear hyperbolic systems with stiff source terms depending of a relaxation parameter $\eps$, which admit an asymptotic diffusion limit. This type of system occurs in many physical applications (transport of particles, damped waves, electromagnetism, linearized gas dynamic, plasma physics) or in biology, and poses some numerical difficulties. The classical Godunov-type discretizations (upwind, Rusanov or HLL schemes) are not efficient because the time and spatial steps are constrained by the relaxation parameter $\eps$ \cite{glaceap}, \cite{jinlev}, \cite{jinbase}. To treat this problem S. Jin, C. D. Levemore \cite{jinlev}-\cite{jinreview} using the ideas of A. Y. Leroux \cite{leroux}, introduced the notion of asymptotic preserving schemes (AP schemes) which eliminate these constraints.
To illustrate the advantage of asymptotic preserving discretizations, we propose a simple numerical example. We solve the hyperbolic heat equation 
\begin{equation}  \label{eq:1}
\left\{
\begin{array}{ll}
\displaystyle\partial_{t} p +\frac{1}{\eps}\partial_x u=0, \\
\\
\displaystyle\partial_{t} u +\frac{1}{\eps}\partial_x p+\frac{\sigma}{\eps} u=0,
\end{array}
\right.
\end{equation} 
with two schemes: the upwind scheme and the asymptotic preserving scheme \cite{Gosse}. This model is approached when $\eps$ is small by the following diffusion equation
$$
\dt p-\dx\left(\frac{1}{\sigma}\dx p\right)=0.
$$
The initial data is given by $p(x,t=0)=G(x)$ with $G(x)$ a Gaussian function and $u(x,t=0)=0$. The parameters are given by $\sigma=1$ and $\eps=0.001$. The time discretization is explicit and the time step is the half of the  stability limit time step. The convergence errors are computed using the exact diffusion solution.
\begin{figure}[ht!]
\begin{center}
\includegraphics[scale=0.35]{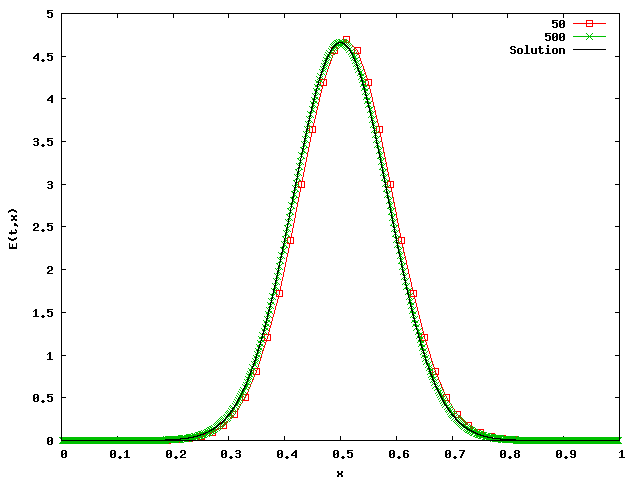}\includegraphics[scale=0.35]{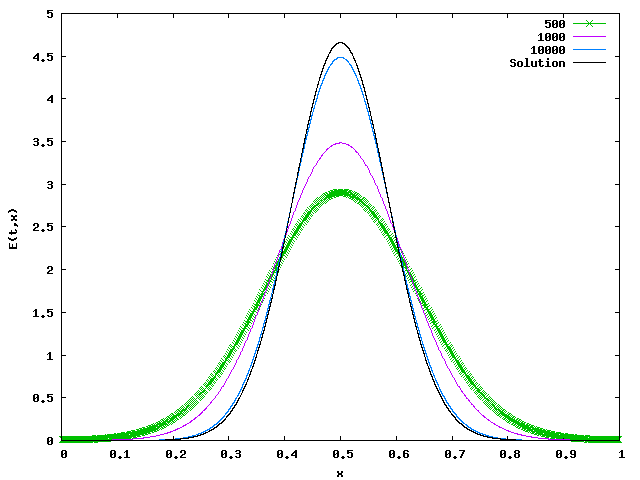}
\end{center}
 \caption{On the left: numerical solution of the Gosse-Toscani scheme for 50 and 500 cells, on the right: numerical solution of the upwind scheme 500, 1000  and 10000 cells}
 \label{apnonap1d}
\end{figure}
\begin{table}[ht!]
\begin{center}
\begin{tabular}{|c|c|c|c|}
\hline 
Schemes & $L^1$ error & $L^2$ error & CPU time \tabularnewline
\hline
\hline 
AP scheme, 50 cells & 0.0065 & 0.0110 & 0m0.054s \tabularnewline
\hline 
AP scheme, 500 cells & 0.0001 & 0.00018 & 0m15.22s  \tabularnewline
\hline 
upwind scheme, 500 cells & 0.445 & 0.647 & 0m24.317s \tabularnewline
\hline
upwind scheme, 1000 cells & 0.279 & 0.113 & 2m9.530s \tabularnewline
\hline 
upwind scheme, 10000 cells & 0.0366 & 0.059 & 1485m4.26s \tabularnewline
\hline
\end{tabular}
\caption{Table with numerical error and CPU time associated to the upwind and Gosse-Toscani schemes.}
\label{tabtimeAP}
\end{center}
\end{table}
The results proposed in table (\ref{tabtimeAP}) and on figure (\ref{apnonap1d}) show that asymptotic preserving scheme is more precise and cheaper in CPU time than the classical upwind scheme. These remarks may justify to use asymptotic preserving for this type of problem.\\
 In 1D, many AP schemes have been designed: a non exhaustive list is S. Jin, C. D. Levermore \cite{jinlev} or L. Gosse, G. Toscani \cite{Gosse} for the hyperbolic heat equation, M. Lemou, L. Mieussens, N. Crouseilles \cite{lemou}-\cite{MMvlasov}-\cite{couplingRad} for some kinetic equations,  C. Hauck, R. G. McClarren \cite{Pndiff} for the $P_N$ equations, L. Gosse \cite{GosseSn}, C. Buet and co-workers \cite{buet} or S. Jin and C. D. Levermore \cite{SnJin} for $S_N$ equations and C. Berthon, R. Turpault \cite{ber1}-\cite{ber2}-\cite{ber3}-\cite{ber4} for generic systems and a non linear radiative transfer model.\\\\
For some applications (ICF simulations \cite{dautray}) we are interested in, the stiff hyperbolic systems are coupled with Lagrangian hydrodynamic codes which generate very distorted meshes. Consequently it is important to design cell-centered asymptotic preserving schemes for the Friedrichs systems with a valid asymptotic diffusion limit on unstructured meshes. Currently these types of schemes based on the nodal scheme \cite{glaceap}-\cite{cras}-\cite{FVCA6} or the MPFA scheme \cite{cemracs}-\cite{breil}-\cite{MPFA} have been only designed for the hyperbolic heat equation and a non linear system used in radiative transfer.\\
 The purpose of this paper is to extend Godunov-type asymptotic preserving schemes for the Friedrichs systems on unstructured meshes.
Firstly we introduce the Friedrichs systems and give a formal proof of the existence of the diffusion limit. In the second part we define a numerical strategy based on a decomposition between a "diffusive" part similar to the hyperbolic heat equation and a "non diffusive" part which is negligible in the diffusion regime. This decomposition, close to the micro-macro decomposition \cite{lemou} allows to design a very simple method to discretize stiff hyperbolic systems. Indeed, using an asymptotic preserving scheme for the "diffusive" part (nodal asymptotic preserving for example \cite{glaceap}) and a classical hyperbolic scheme for the "non diffusive" part we obtain an asymptotic preserving discretization for the complete system. After this, we show how angular discretizations such as $P_N$ and $S_N$ models fall within this framework. This could be applied to other angular discretizations like those based on wavelet expansion for instance. To finish we propose some considerations on temporal discretizations and numerical results for $P_N$ and the $S_N$ systems.

\section{Friedrichs systems}
\subsection{Definition}
In this section we introduce linear Friedrichs systems with stiff source terms and their diffusion limit.
We work in dimension two, $D\subset\mathbb{R}^2$ is a polygonal domain.
\begin{definition}\label{first1} The sub-class of Friedrichs systems that we consider are defined by:
 \\
\begin{equation}\label{fri1}
\dt\U+\frac{1}{\eps}A_1\dx\U+\frac{1}{\eps}A_2\dy\U=-\frac{\sigma}{\eps^2}R\U,
\end{equation}
with $\U:D\times \mathbb{R}^+ \longrightarrow \mathbb{R}^n$, $A_1$, $A_2$, $R$ are constant, symmetric and real square matrices. We assume moreover that the matrix $R$ is non negative, i.e. $(R x,x)\geq 0$ for all $x\in \mathbb{R}^n$ and  $ \operatorname{Ker} R \neq  \emptyset$.
\end{definition}
Non invertibility of the matrix $R$ is important to obtain a non trivial asymptotic regime.
The parameter $\sigma$ is, in general, a positive and a lower bounded function but for the theoretical analysis we assume that $\sigma$ is constant and positive. The relaxation parameter is $\eps\in\left]0,1\right]$.
Since the matrices $A_1$, $A_2$ are symmetric, the system is hyperbolic. Indeed the matrix $A_1n^x+A_2n^y$ is symmetric  for all $\mathbf{n}=(n_x,n_y)\in\mathbb{S}^1$.

We define the functional spaces:
$$
L^2(D)=\left\{ \U,\,||\U||_{L^2(D)}=\left(\int_{D} (\U,\U)\right)^\frac12 dxdy<\infty\right\},
$$
$$
H^{p}(D)=\left\{ \U\in L^2(D),||\U||_{H^p(D)}=\sum_{a,b}^{a+b\leq p}||\partial_{x^a,y^b} \U||_{L^2}<\infty\right\},
$$
Foremost, we  recall a classical result of stability for such systems.
\begin{proposition}
If $D=[0,1]^2$ with periodic boundary conditions,    systems (\ref{fri1}) are stable in $H^{p}(D)$.
\end{proposition}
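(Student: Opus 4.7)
The plan is to use the standard energy method based on the symmetry of $A_1$, $A_2$ and the non-negativity of $R$, and then to lift it from $L^2$ to $H^p$ by exploiting the fact that $A_1$, $A_2$, $R$ are constant matrices.

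First I would establish $L^2$-stability. Taking the Euclidean inner product of (\ref{fri1}) with $\U$ and integrating over $D$, one gets
\begin{equation*}
\frac{1}{2}\frac{d}{dt}\|\U\|_{L^2(D)}^2 + \frac{1}{\eps}\int_D (A_1\dx\U,\U)\,dxdy + \frac{1}{\eps}\int_D (A_2\dy\U,\U)\,dxdy = -\frac{\sigma}{\eps^2}\int_D (R\U,\U)\,dxdy.
\end{equation*}
Because $A_1$ is symmetric, $2(A_1\dx\U,\U) = \dx(A_1\U,\U)$, and similarly for $A_2$. The two flux integrals therefore reduce, via the divergence theorem, to boundary terms that vanish under the periodic boundary conditions on $D=[0,1]^2$. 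Combined with $(R\U,\U)\geq 0$, this yields
\begin{equation*}
\frac{d}{dt}\|\U\|_{L^2(D)}^2 \leq 0,
\end{equation*}
hence $\|\U(t)\|_{L^2(D)}\leq \|\U(0)\|_{L^2(D)}$.

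Second, I would upgrade this to $H^p$. Since $A_1$, $A_2$, $R$ are constant matrices, the operator on the left of (\ref{fri1}) commutes with any spatial derivative $\partial_{x^a y^b}$. Hence for every multi-index with $a+b\leq p$, the function $\U^{(a,b)} := \partial_{x^a y^b}\U$ satisfies exactly the same Friedrichs system
\begin{equation*}
\dt \U^{(a,b)} + \frac{1}{\eps}A_1\dx \U^{(a,b)} + \frac{1}{\eps}A_2\dy \U^{(a,b)} = -\frac{\sigma}{\eps^2} R\,\U^{(a,b)},
\end{equation*}
and, crucially, periodic boundary conditions are preserved under differentiation. Applying the $L^2$-estimate of the first step to each $\U^{(a,b)}$ gives $\|\U^{(a,b)}(t)\|_{L^2(D)}\leq \|\U^{(a,b)}(0)\|_{L^2(D)}$. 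Summing over $a+b\leq p$ with the definition of the $H^p$ norm recalled in the excerpt yields $\|\U(t)\|_{H^p(D)}\leq \|\U(0)\|_{H^p(D)}$.

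No real obstacle is expected: the argument is purely the symmetric-hyperbolic energy method, and the two ingredients that make everything clean — the symmetry of $A_1,A_2$ (which kills the transport contribution modulo periodic boundary fluxes) and the constancy of the coefficients (which allows one to commute $\partial_{x^a y^b}$ through the equation) — are both part of the hypotheses in Definition \ref{first1}. The only point that deserves a brief comment in the write-up is that the relaxation term $-\frac{\sigma}{\eps^2}R\U$ is actually dissipative rather than merely harmless, so the $H^p$ bound is uniform in $\eps$, which is the property needed later for the asymptotic-preserving analysis.
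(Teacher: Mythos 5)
Your proof is correct and follows essentially the same route as the paper: the $L^2$ energy estimate using the symmetry of $A_1$, $A_2$ to reduce the transport terms to periodic boundary fluxes together with the non-negativity of $R$, followed by the observation that $\partial_{x^a}\partial_{y^b}\U$ solves the same constant-coefficient system, which lifts the bound to $H^p(D)$. Your added remark that the estimate is uniform in $\eps$ is a useful clarification but does not change the argument.
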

\begin{proof} We begin by  proving the $L^2$ stability:
$$
\frac{1}{2} \frac{d}{dt}|| \U ||_{L^2(D)}^2=\int_{D} (\U,\dt\U)dxdy=-\frac{1}{\eps}\int_{D} (A_1\dx\U+A_2\dy\U+\frac{\sigma}{\eps}R\U,\U)dxdy.
$$
Since the matrix $A_1$ is constant and symmetric real, the first integral of the right hand side writes
\begin{equation}\label{fri2}
\int_{D}(A_1\dx \U,\U)dxdy=\frac12\int_{D}\dx (\U,A_1\U)dxdy,
\end{equation}
and since we consider periodic boundary conditions,
$
\int_{D}(A_1\dx\U,\U)dxdy=0.
$
In the same way we show that the second integral of the right hand side is null. Thus:
$$
\frac{1}{2} \frac{d}{dt}|| \U ||_{L^2(D)}^2=-\int_{D} \frac{\sigma}{\eps^2}(R\U,\U)dxdy\leq 0.
$$
since the matrix  $R$ is non negative. The $L^2(D)$ norm of the solution decreases, thus the system is $L^2$-stable. We can check that  $\mathbf{V}=\partial_{x^a}\partial_{y^b}\mathbf{U}$ is also a solution of the system  (\ref{fri1}) which gives the  $H^{p}(D)$ stability.
\end{proof}

\subsection{Diffusion limit of the Friedrichs systems}
In this section we propose a formal existence result for the asymptotic diffusion limit. 
We introduce a structure assumption.\\\\
\textbf{Assumption $(H_1)$}:
\textit{Let $(\mathbf{E}_1,...\mathbf{E}_n)$ be the eigenvectors of $R$ and let $(\mathbf{E}_1,...\mathbf{E}_p)$ be the basis of the kernel of $R$. The vectors are orthonormal. We assume that, there are two particular linearly independent eigenvectors $\mathbf{E}_{i_1}$, $\mathbf{E}_{i_2}$ associated to eigenvalues $\lambda_{i_1}>0$, $\lambda_{i_2}>0$ with the structure assumption
\begin{equation}\label{hhh1}
\quad\left\{\begin{array}{c}
 A_1\mathbf{E}_i=\gamma^1_i\mathbf{E}_{i_1}\quad\mbox{$\forall i\in\left\{1...p\right\}$},\\
A_2\mathbf{E}_i=\gamma^2_i\mathbf{E}_{i_2}\quad\mbox{$\forall i\in\left\{1...p\right\}$}.\end{array}\right.
\end{equation}
}
In other sections we will show that the simplified models as $P_N$ or $S_N$ in linear transport theory satisfy the previous structure assumption.
For the hyperbolic heat equation extended to  2D $p=1$, $A_1\mathbf{E}_1=\mathbf{E}_2$ and $A_2\mathbf{E}_1=\mathbf{E}_3$
where $\mathbf{E}_1$ is the eigenvector associated to the eigenvalue $0$ and $\mathbf{E}_2$, $\mathbf{E}_3$ are the eigenvectors associated at the eigenvalue $1$ of the matrix $R$.
\begin{proposition}
 If the assumption $(H_1)$ is satisfied, the system (\ref{fri1}) admits the formal diffusion limit
\begin{equation}\label{fri6}
\dt \mathbf{V}-\frac{1}{\lambda_{i_1}\sigma}K_1\partial_{xx}\mathbf{V}-\frac{1}{\lambda_{i_2}\sigma}K_2\partial_{yy}\mathbf{V}=\mathbf{0},
\end{equation}
with $\mathbf{V}=((\mathbf{U},\mathbf{E}_1),....,(\mathbf{U},\mathbf{E}_p))\in\mathbb{R}^p$, $K_1=\mathbf{\gamma}^1\otimes\mathbf{\gamma}^1\in\mathbb{R}^p\times\mathbb{R}^p$, $K_2=\mathbf{\gamma}^2\otimes\mathbf{\gamma}^2\in\mathbb{R}^p\times\mathbb{R}^p$ non negatives symmetric matrices where the vectors $\mathbf{\gamma}^k=(\gamma_1^k,...,\gamma_p^k)$ are defined in (\ref{hhh1}).
\end{proposition}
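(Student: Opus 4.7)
The natural route is a formal Hilbert expansion $\U = \U_0 + \eps \U_1 + \eps^2 \U_2 + \cdots$ inserted into (\ref{fri1}), collecting like powers of $\eps$. The leading $\eps^{-2}$ balance reads $\sigma R \U_0 = 0$, so $\U_0 \in \ker R$; decomposing on the orthonormal basis of the kernel gives $\U_0 = \sum_{i=1}^p V_i \mathbf{E}_i$ with $V_i = (\U_0,\mathbf{E}_i)$, i.e.\ $\U_0$ is determined by the vector $\mathbf{V}$ of the statement.

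At order $\eps^{-1}$, I obtain $A_1 \dx \U_0 + A_2 \dy \U_0 = -\sigma R \U_1$. This is exactly where assumption $(H_1)$ is used: it ensures $A_1 \U_0 = (\boldsymbol{\gamma}^1,\mathbf{V})\,\mathbf{E}_{i_1}$ and $A_2 \U_0 = (\boldsymbol{\gamma}^2,\mathbf{V})\,\mathbf{E}_{i_2}$ are proportional to eigenvectors of $R$ with positive eigenvalues $\lambda_{i_1},\lambda_{i_2}$, hence lie in $\operatorname{Im} R = (\ker R)^\perp$, which is the compatibility condition for solving in $\U_1$. Inverting $R$ on that subspace yields
\[
\U_1 = -\frac{(\boldsymbol{\gamma}^1,\dx\mathbf{V})}{\sigma \lambda_{i_1}}\,\mathbf{E}_{i_1} - \frac{(\boldsymbol{\gamma}^2,\dy\mathbf{V})}{\sigma \lambda_{i_2}}\,\mathbf{E}_{i_2} + \U_1^{\ker},
\]
where $\U_1^{\ker}\in\ker R$ is free but will drop out below.

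At order $\eps^0$, the equation $\dt \U_0 + A_1 \dx \U_1 + A_2 \dy \U_1 = -\sigma R \U_2$ has to be projected onto $\ker R$ to eliminate the unknown $\U_2$: taking the inner product with $\mathbf{E}_j$ ($1\le j\le p$) and using symmetry of $R$ gives $(R\U_2,\mathbf{E}_j)=(\U_2,R\mathbf{E}_j)=0$. For the convective terms, symmetry of $A_k$ together with $(H_1)$ gives $(A_k \partial_k \U_1, \mathbf{E}_j) = \gamma_j^k\,(\partial_k\U_1,\mathbf{E}_{i_k})$, and orthonormality of the eigenbasis selects only the $\mathbf{E}_{i_k}$ component of $\U_1$, with $\U_1^{\ker}$ contributing nothing. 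This produces
\[
\dt V_j \;=\; \frac{\gamma_j^1}{\sigma \lambda_{i_1}}(\boldsymbol{\gamma}^1,\partial_{xx}\mathbf{V}) + \frac{\gamma_j^2}{\sigma \lambda_{i_2}}(\boldsymbol{\gamma}^2,\partial_{yy}\mathbf{V}),
\]
which is exactly the componentwise form of (\ref{fri6}) once one observes $(K_k \mathbf{W})_j = \gamma_j^k(\boldsymbol{\gamma}^k,\mathbf{W})$. Nonnegativity and symmetry of $K_k$ are automatic from $K_k = \boldsymbol{\gamma}^k \otimes \boldsymbol{\gamma}^k$.

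The main obstacle is the solvability at order $\eps^{-1}$: without $(H_1)$, the vectors $A_1 \mathbf{E}_i$ and $A_2 \mathbf{E}_i$ for $i\le p$ could have components on $\ker R$, making the equation unsolvable and preventing a diffusion closure. Everything else is routine algebra in the orthonormal eigenbasis, provided one is careful that $\mathbf{E}_{i_1},\mathbf{E}_{i_2}$ are orthogonal to $\ker R$ (which follows because $R$ is symmetric with positive eigenvalues on $\operatorname{Im} R$).
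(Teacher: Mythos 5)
Your proof is correct and follows essentially the same route as the paper: a Hilbert expansion, the $\eps^{-2}$ balance placing $\U_0$ in $\ker R$, the $(H_1)$-based solvability at order $\eps^{-1}$ giving the explicit $\U_1$ up to a kernel element, and the projection of the $\eps^0$ equation onto $\ker R$ to close the diffusion equation. Your handling of the cross terms and of the kernel part of $\U_1$ via orthonormality of the eigenbasis is the same mechanism the paper spells out in equations (\ref{termcroises1})--(\ref{termcroises2}) and (\ref{resid}), just stated more compactly.
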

\begin{proof}
Using a Hilbert expansion $\U=\U_0+\eps\U_1+\eps^2\U_2+o(\eps^2)$ in (\ref{fri1}), we obtain the hierarchy of equations:
\begin{equation}\label{h1}
\frac{1}{\eps^2} :\qquad R\U_0=\mathbf{0},
\end{equation}
\begin{equation}\label{h2}
\frac{1}{\eps^1} :\qquad A_1\dx\U_0+A_2\dy\U_0=-\sigma R \U_1,
\end{equation}
\begin{equation}\label{h3}
\frac{1}{\eps^0} :\qquad \dt\U_0+A_1\dx\U_1+A_2\dy\U_1=-\sigma R \U_2.
\end{equation}
Equation (\ref{h1}) shows that $\U_0\in \operatorname{Ker}R$. Therefore,
\begin{equation}\label{DLu}
\ds \U_0=\sum_{j=1}^p(\U,\mathbf{E}_j)\mathbf{E}_j.
\end{equation}
Equation (\ref{h2}) implies the existence of  $\U_1$ up to an element of the kernel under the following compatibility condition
$$
A_1\dx\U_0+A_2\dy\U_0 \in (\operatorname{Ker} R)^{\perp}.
$$
The assumption ($H_1$) and the equation (\ref{DLu}) show that $\U_0$ is such that
$$
A_1\dx\U_0=\left(\sum_j^p \dx (\mathbf{U}_0,\mathbf{E}_j)\gamma^1_j\right)\mathbf{E}_{i_1}, \quad A_2\dy\U_0=\left(\sum_j^p \dy(\mathbf{U}_0,\mathbf{E}_j) \gamma^2_j\right)\mathbf{E}_{i_2}.
$$
Using the definition of the eigenvectors and the linearity, we obtain the relation
$$
R\left(\frac{A_1\dx\U_0}{\lambda_{i_1}}+ \frac{A_2\dy\U_0}{\lambda_{i_2}}\right)=A_1\dx\U_0+A_2\dy\U_0,
$$
which gives the expression of $\mathbf{U}_1$
\begin{equation}\label{h4}
\U_1= -\frac{1}{\sigma}\left(\frac{A_1\dx\U_0}{\lambda_{i_1}}+\frac{A_2\dy\U_0}{\lambda_{i_2}}\right)+\mathbf{z},\quad,\mathbf{z}\in \operatorname{Ker} R
\end{equation}
Using the relation (\ref{h3}), we show the existence of  $\U_2$ up to an element of the kernel under the following compatibility condition
$$
\dt \U_0 +A_1\dx\U_1+A_2\dy\U_1 \in (\operatorname{Ker} R)^{\perp}.
$$
Since $\operatorname{Ker}(R)=\operatorname{Vect}(\mathbf{E}_1,....,\mathbf{E}_p)$, the compatibility condition can be written as 
\begin{equation}\label{h5}
\dt (\U_0,\mathbf{E}_i) +\dx(A_1\U_1,\mathbf{E}_i)+\dy(A_2\U_1,\mathbf{E}_i)=\mathbf{0}\quad i\in\left\{1..p\right\}.
\end{equation}
Now we plug the relation (\ref{h4}) in (\ref{h5}) to obtain the equations
\begin{equation}\label{equadifflim}
\left\{\begin{array}{l}
\ds\dt (\U_0,\mathbf{E}_i)-\frac{1}{\lambda_{i_1}\sigma}\partial_{xx}\left(A_1\U_0A_1\mathbf{E}_i\right)-\frac{1}{\lambda_{i_2}\sigma}\partial_{yy}\left(A_2\U_0,A_2\mathbf{E}_i\right)\\
-\ds\frac{1}{\lambda_{i_2}\sigma}\partial_{xy}(A_1\U_0,A_2\mathbf{E}_i)-\frac{1}{\lambda_{i_1}\sigma}\partial_{yx}(A_2\U_0,A_1\mathbf{E}_i)+N_i=0\mbox{, for }i\in\left\{1..p\right\},\end{array}\right.
\end{equation}
where 
\begin{equation}\label{resid}
N_i=\dx(A_1\mathbf{z},\mathbf{E}_i)+\dy(A_2\mathbf{z},\mathbf{E}_i)
= \dx(\mathbf{z},A_1\mathbf{E}_i)+\dy(\mathbf{z},A_2\mathbf{E}_i).
\end{equation}
The assumption $(H_1)$ and the orthogonality of the eigenvectors show that the terms $A_1\mathbf{E}_i$, $A_2\mathbf{E}_i$ are orthogonal to $\mathbf{z}$. Consequently the terms $N_i$ (\ref{resid}) are equal to zero.
Now we study the cross terms $(A_1\U_0,A_2\mathbf{E}_i)$ and $(A_2\U_0,A_1\mathbf{E}_i)$. One has
\begin{equation}\label{termcroises1}
(A_1\U_0,A_2\mathbf{E}_i)=\left(A_1\left(\sum_{j=1}^p (\U_0,\mathbf{E}_j)\mathbf{E}_j\right),A_2\mathbf{E}_i\right)=\left(\left(\sum_{j=1}^p\gamma_j^1 (\U_0,\mathbf{E}_j)\right)\mathbf{E}_{i_1},\gamma_i^2\mathbf{E}_{i_2}\right)=0,
\end{equation}
\begin{equation}\label{termcroises2}
(A_2\U_0,A_1\mathbf{E}_i)=\left(A_2\left(\sum_{j=1}^p (\U_0,\mathbf{E}_j)\mathbf{E}_j\right),A_1\mathbf{E}_i\right)=\left(\left(\sum_{j=1}^p\gamma_j^2 (\U_0,\mathbf{E}_j)\right)\mathbf{E}_{i_2},\gamma_i^1\mathbf{E}_{i_1}\right)=0.
\end{equation}
The cross terms vanish. For the other terms we obtain
\begin{equation}\label{simptermnoncroise}
(A_k\U,A_k\mathbf{E}_i)=\sum_j(\U_0,\mathbf{E}_j)(A_k\mathbf{E}_j,A_k\mathbf{E}_i)=\sum_j(\U_0,\mathbf{E}_j)\gamma^k_j\gamma^k_i.
\end{equation}
So the equations (\ref{equadifflim}) with $\U_0=\U$ are equivalent to the equations (\ref{fri1})
with
$K_1=\mathbf{\gamma}^1\otimes\mathbf{\gamma}^1$ and $K_2=\mathbf{\gamma}^2\otimes\mathbf{\gamma}^2$.
These matrices are symmetric by definition. Moreover
$$
(\mathbf{X},K_k\mathbf{X})=(\mathbf{\gamma}^k,\mathbf{X})^2\geq 0\mbox{ }\forall\mbox{ }\mathbf{X}\in\mathbb{R}^p,
$$
therefore the matrices $K_k$ are non negatives.
\end{proof}
\begin{remark}
\begin{itemize}
\renewcommand{\labelitemi}{$\bullet$}
\item Since the matrices $K_1$ and $K_2$ are non negatives, the system (\ref{fri6}) is dissipative.
\item The size of the diffusion equation (\ref{fri6}) is equal at the multiplicity of the eigenvalue $0$ of the matrix $R$.
\item The hypothesis $(H_1)$ is sufficient but not necessary. The assumption $A_k\mathbf{E}_i\in (\operatorname{Ker} R)^{\perp}$ for $i\in\left\{1..p\right\}$ is also possible. 
\item If $\lambda_{i_1}=\lambda_{i_2}$ the diffusion equation is isotropic.
\item In 3D the proof uses the same principle.
\end{itemize}
\end{remark}
\section{Discretization strategy}
In this section, we propose a strategy to design finite volume schemes valid for Friedrichs system on unstructured meshes.
The method is only valid for the Friedrichs systems which have a scalar diffusion limit ($\operatorname{dim} \operatorname{Ker} R=1$).
The idea is to split the Friedrichs system between a "diffusive" part similar to the hyperbolic heat equation and a "non diffusive" part which is negligible in the diffusive regime. 
This method is in the principle very close to the micro-macro decomposition used in \cite{lemou}-\cite{MMvlasov}.
\subsection{Principle of the "diffusive - non diffusive" decomposition}
 The "diffusive - non diffusive" decomposition uses the particular structure of some Friedrichs systems described by the following assumption.
\textbf{Assumption $(H_2)$}:
\textit{Assume that
\begin{equation}\label{H2}
(H_2)\quad\left\{\begin{array}{l}
 \dim (\operatorname{Ker} R)=1 \mbox{, consequently } \operatorname{Ker} R=\operatorname{Vect}(\mathbf{E}_1),\\
 \lambda_{2}=\lambda_{3}=\lambda \mbox{ (we study isotropic diffusion limit)},\\
 A_1\mathbf{E}_1=a \mathbf{E}_2\mbox{, }A_2\mathbf{E}_1=a \mathbf{E}_3,\end{array}\right.
\end{equation}
with $\lambda_i$ the eigenvalues , by convention $\lambda_1=0$, and $\mathbf{E}_i$ the eigenvectors of $R$.}\\\\
 Since $R $ is symmetric the matrix can be written on the following form $R=Q D Q^t$ with $D$ diagonal matrix and $Q$ an orthogonal matrix where the column are the eigenvectors of $R$. Since $R$ is non negative, the coefficients of $D$ are non negative.
We define $\mathbf{V}=Q^t \U$, 
\begin{equation}\label{Friedrichss2}
\partial_t \mathbf{V}+ \frac{1 }{\varepsilon}\left (A_1^{'}\partial_x \mathbf{V}+ A_2^{'}\partial_y \mathbf{V} \right )=-\frac{\sigma }{\varepsilon^2}D \mathbf{V},
\end{equation}
with $A_1'=Q^t A_1 Q$, $A_2'=Q^t A_2 Q$ symmetric matrices.
\begin{lemma} Under the assumption $(H_2)$, the matrices
 $A_1'$, $A_2'$ have the following block  structure

\begin{equation}\nonumber
A_1'=\left(
\begin{array}{cccccccc}
0 & C_1 \\ 
C_1^t & B_1  
 
\end{array}\right)\quad
A_2'=\left(
\begin{array}{cccccccc}
0 & C_2 \\ 
C_2^t & B_2  
\end{array}\right).\end{equation}
where $B_1$ and $B_2$ are $(n-1)\times (n-1)$ symmetric matrices, and $C_1$ and $C_2$ are  $1\times (n-1)$ matrices whose elements are defined by 
$C_k,j=a\delta_{k,j}$ for $k=1,2$ and $j=1,...,n-1$ and $\delta_{k,j}$ stands for the Kronecker product.

\end{lemma}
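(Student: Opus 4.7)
The plan is to compute the entries of $A_k' = Q^t A_k Q$ directly in the eigenbasis of $R$. Since the columns of $Q$ are the orthonormal eigenvectors $\mathbf{E}_1,\dots,\mathbf{E}_n$, the $(i,j)$ entry of $A_k'$ is simply the inner product $(\mathbf{E}_i, A_k \mathbf{E}_j)$. I would therefore reduce the lemma to establishing three facts: (a) the $(1,1)$ entry of each $A_k'$ vanishes, (b) the first row (and by symmetry the first column) has exactly one nonzero entry, in the $(k+1)$-th position, equal to $a$, and (c) the lower-right $(n-1)\times(n-1)$ block $B_k$ is symmetric.

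For (a), I would apply assumption $(H_2)$: $A_1\mathbf{E}_1 = a\mathbf{E}_2$ and $A_2\mathbf{E}_1 = a\mathbf{E}_3$, so the $(1,1)$ entries are $(\mathbf{E}_1, a\mathbf{E}_2)$ and $(\mathbf{E}_1, a\mathbf{E}_3)$, both zero by orthonormality. For (b), still using $(H_2)$, the first column of $A_1'$ has entries $(\mathbf{E}_i, A_1\mathbf{E}_1) = a(\mathbf{E}_i,\mathbf{E}_2) = a\delta_{i,2}$, and likewise for $A_2'$ the first column is $a\delta_{i,3}$. Taking transpose and using the fact that $A_k' = Q^t A_k Q$ is symmetric (since $A_k$ is symmetric and $Q$ is orthogonal), the first row takes the same form; shifting the index by one to account for the block decomposition identifies $C_k$ as the $1\times(n-1)$ row whose $j$-th entry is $a\delta_{k,j}$.

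For (c), the block $B_k$ is a principal submatrix of the symmetric matrix $A_k'$, hence symmetric; no further structural information on $B_k$ is claimed or needed.

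This is essentially a bookkeeping exercise — the only step that requires some care is checking that the index shift between the eigenvector label ($\mathbf{E}_2,\mathbf{E}_3$) and the position within the off-diagonal row $C_k$ (whose single nonzero entry sits in column $k$, not column $k+1$) is handled consistently, so that the Kronecker-delta formula $C_{k,j} = a\delta_{k,j}$ in the statement does match the computed entry $(\mathbf{E}_1, A_k\mathbf{E}_{j+1}) = a\delta_{k+1,j+1}$. Once this is observed, the lemma follows immediately from $(H_2)$ and orthonormality, with no analytic obstacle.
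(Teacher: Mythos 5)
Your proof is correct and follows essentially the same route as the paper: both compute $A'_{k,ij}=(\mathbf{E}_i,A_k\mathbf{E}_j)$, use $(H_2)$ together with orthonormality of the $\mathbf{E}_i$ to evaluate the first row/column, and invoke symmetry of $A_k'$ for the remaining block. Your explicit check of the index shift between the eigenvector labels $\mathbf{E}_2,\mathbf{E}_3$ and the positions inside $C_k$ is a welcome clarification of a point the paper leaves implicit, but it does not change the argument.
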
\begin{proof}
Let us consider the matrix$A_1'$.
Using the definition of the matrix $Q$  we have
$$
A_{1,ij}'=(\mathbf{E}_i,A_1\mathbf{E}_j). 
$$
For the first line we have then, remembering that $E_1, ...,E_n$ is an orthonormal basis,
$$
A'_{1,1j}=(\mathbf{E}_1,A_1\mathbf{E}_j)=(A_1\mathbf{E}_1,\mathbf{E}_j)=(\mathbf{E}_1,A_1\mathbf{E}_j)=(a\mathbf{E}_2,\mathbf{E}_j)=a\delta_{2j}.
$$ 
Since we are dealing with symmetric matrices, the results follows for the first column.
By  the same way we obtain the desired result for  the matrix  $A_2'$.
\end{proof}
Therefore we can rewrite the system (\ref{Friedrichss2}) as
\begin{equation}\label{Friedrichss3}
\partial_t \mathbf{V} + \frac{1 }{\varepsilon}\left (P_{1,x}\partial_x \mathbf{V}+ P_{1,y}\partial_y \mathbf{V} \right )
             + \frac{1 }{\varepsilon}\left (A_1^{''}\partial_x \mathbf{V}+ A_2^{''}\partial_y \mathbf{V} \right )=-\frac{\sigma }{\varepsilon^2}D \mathbf{V},
\end{equation}
where the matrices $P_{1,x}$, $P_{1,y}$  are defined, as block matrices,  by

\begin{equation}\label{p1mat}\nonumber
P_{1,x}=\left(
\begin{array}{cccccccc}
Q_1 & 0 \\ 
0 & 0  
\end{array}\right)\quad
P_{1,y}=\left(
\begin{array}{cccccccc}
Q_2 & 0  \\ 
0 & 0  
\end{array}\right)\end{equation}
where $Q_1$ and $Q_2$ are the $3\times 3$ matrices

\begin{equation}\label{p1mat_small}\nonumber
Q_1=\left(
\begin{array}{ccc}
0 & a & 0 \\
a & 0 & 0  \\ 
0 & 0 & 0 
\end{array}\right)\quad
Q_2=\left(
\begin{array}{ccc}
0 & 0 & a \\ 
0 & 0 & 0  \\ 
a & 0 & 0  
\end{array}\right)\end{equation}
and $A_1^{''}=A_1^{'} -P_{1,x}$, $A_2^{''}=A_2^{'} -P_{1,y}$ are symmetric matrices where the first line and column of the matrices $A_1^{''}$, $A_2^{''}$ are equal to zero. 
Next we decompose the model (\ref{Friedrichss3}) between two systems. The first part of the system is very close to the hyperbolic heat equation
\begin{equation}\label{decompo1}
\ds\partial_t \mathbf{V} + \frac{1 }{\varepsilon}\left( P_{1,x}\partial_x \mathbf{V}+ P_{1,y}\partial_y \mathbf{V} \right)=-\frac{\sigma }{\varepsilon^2}D^{'} \mathbf{V},
\end{equation}
with, for the diagonal matrix $D'$, $D^{'}_{11}=0$, $D^{'}_{22}=D^{'}_{33}=\lambda_2$ and $D^{'}_{ii}=0\mbox{ }i\geq 4$. 
The second system is given by
\begin{equation}\label{decompo2}
\ds\partial_t \mathbf{V}+ \frac{1}{\varepsilon}\left(A_1^{''}\partial_x \mathbf{V}+ A_2^{''}\partial_y \mathbf{V} \right)=-\frac{\sigma }{\varepsilon^2}D^{''} \mathbf{V},
\end{equation}
with, for the diagonal matrix $D''$, $D^{''}_{11}=D^{''}_{22}=D^{''}_{33}=0$ and $D^{''}_{ii}=\lambda_i\mbox{ }i\geq 4$. This decomposition is a little bit different from the micro-macro decomposition. Indeed when we use the micro-macro decomposition for the linear kinetic equation (some Friedrichs systems can be interpreted as angular discretization to the linear kinetic equation) we split the isotropic part homogeneous to $O(1)$ and the residual homogeneous to $O(\eps)$. When we apply an asymptotic analysis to our decomposition we remark that we split the equations associated to the unknowns homogeneous to $O(1)$ and $O(\eps)$ (first system) which gives the diffusion limit and the equations associated to the unknowns homogeneous to $O(\eps^2)$ (second system) which are negligible in the diffusion regime. \\\\
\textbf{Principle of discretization}:\\
\textit{The proposed numerical method consists to use an asymptotic preserving scheme for the "diffusive" part (\ref{decompo1}) and a classical hyperbolic scheme for the "non diffusive" part (\ref{decompo2}). 
In the following section we will introduce the different numerical schemes for the two parts of the decomposition.}

\subsection{Asymptotic preserving scheme for hyperbolic heat equation}
The discretization of the "diffusive" part (\ref{decompo1})
is based on a specific  asymptotic preserving scheme that we recall
now  for the hyperbolic heat equation 
\begin{equation}\label{HHE}
\left\{\begin{array}{l}
\ds\partial_t p+\frac{a}{\eps}\operatorname{div}\mathbf{u}=0,\\
\\
\ds\partial_t \mathbf{u}+\frac{a}{\eps}\nabla p=-\frac{\sigma\lambda}{\eps^2}\mathbf{u},\end{array}\right.
\end{equation} 
where $p\in \mathbb{R}$ and $\mathbf{u}\in\mathbb{R}^2$.
In \cite{glaceap} we have observed that the classical extension of Godunov-type asymptotic preserving schemes (Jin-Levermore scheme \cite{jinlev} or Gosse-Toscani \cite{Gosse} scheme) in 2D are convergent only on regular meshes which satisfy the Delaunay condition \cite{FV}. Indeed the numerical viscosity of the hyperbolic scheme gives a non consistent limit diffusion scheme (two point flux approximation (TPFA)  scheme \cite{glaceap}-\cite{FV}) on unstructured meshes. To solve this problem two methods have been proposed. In \cite{glaceap} the extensions of Jin-Levermore scheme and Gosse-Toscani scheme have been designed using the nodal finite volumes formulation (the fluxes are localized at the nodes) \cite{kluth}-\cite{glace} because the numerical viscosity of this scheme has a better structure. Another method is introduced in \cite{cemracs} based on the convergent diffusion scheme MPFA (MultiPoint Flux Approximation)  \cite{MPFA}.\\
Let us consider an unstructured mesh in dimension two. The mesh is defined
by a finite number of vertices $\mathbf{x}_{r}$ and cells $\Omega_{j}$. We denote $\mathbf{x}_{j}$ a point arbitrarily chosen inside $\Omega_j$.
For simplicity we will call this point the center of the cell.
 By convention the vertices
are listed counter-clockwise $\mathbf{x}_{r-1},\mathbf{x}_{r},\mathbf{x}_{r+1}$ with
coordinates $\mathbf{x}_{r}=(x_{r},y_{r})$. The length $l_{jr}$ and the normal $\mathbf{n}_{jr}$ associated to the node $r$ et the cell $j$ are defined by
\begin{equation}\label{eq:not}
l_{jr}=\frac{1}{2}|\mathbf{x}_{r+1}-\mathbf{x}_{r-1}|
\mbox{ and }
\mathbf{n}_{jr}=\frac{1}{2l_{jr}}\left(\begin{array}{c}
-y_{r-1}+y_{r+1}\\
x_{r-1}-x_{r+1}
\end{array}\right).
\end{equation}
The convention is that the norm of a vector $\mathbf{x} \in \mathbb{R}^2$ is denoted as $| \mathbf{x} | $. The scalar product of two vectors is $(\mathbf{x},\mathbf{y})$.
\begin{figure}[ht!]
\begin{center}
\scalebox{0.5}{\includegraphics{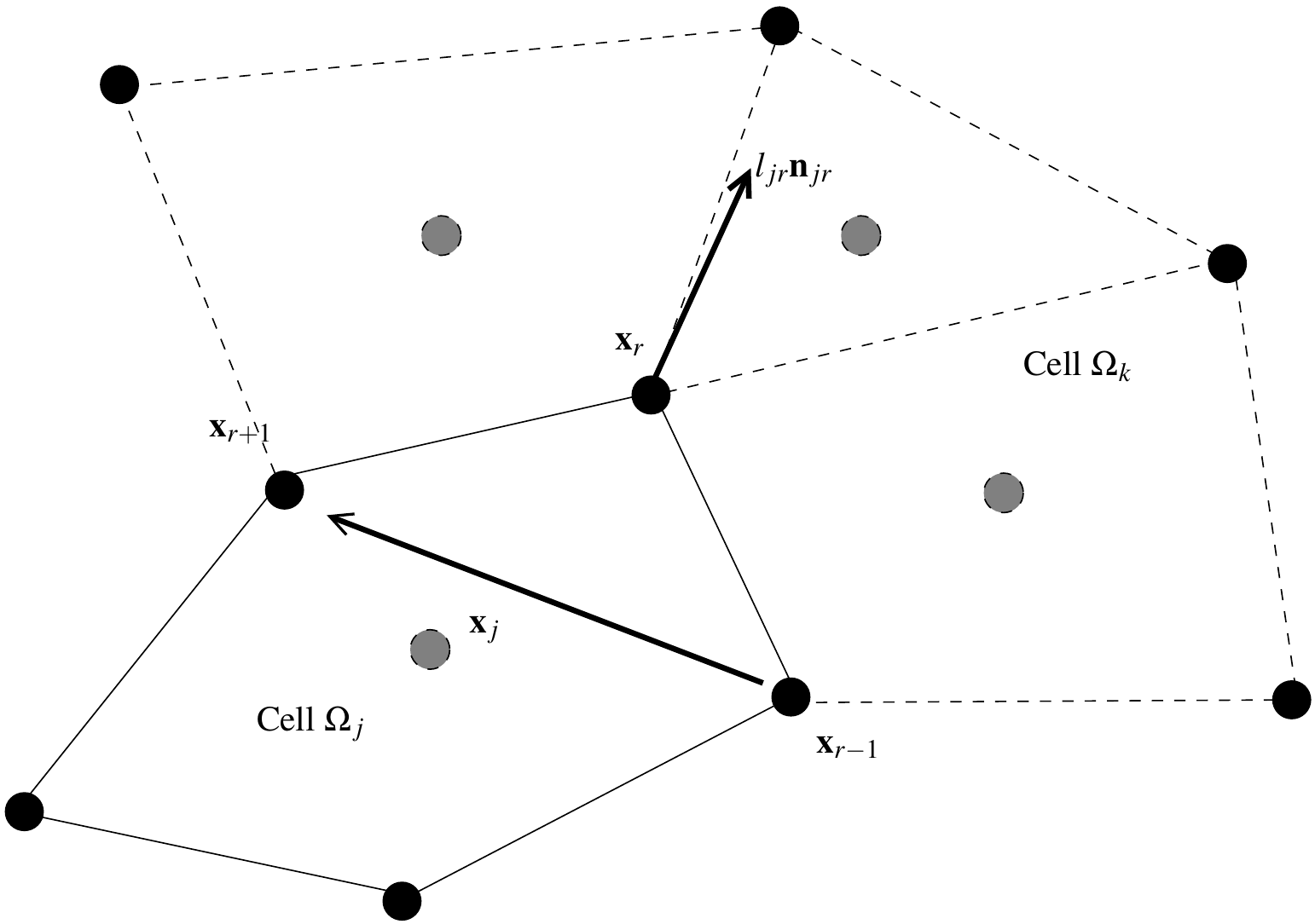}}
\end{center}

\caption{Notation for node formulation. The corner length $l_{jr}$ 
and the corner normal $\mathbf{n}_{jr}$ are defined in equation (\ref{eq:not}).
Notice that $l_{jr}\mathbf{n}_{jr}$ is equal to the orthogonal vector to the half of the 
vector that starts at  $\mathbf{x}_{r-1}$ and finish at $\mathbf{x}_{r+1}$.
The center of the cell is an arbitrary point inside the cell.}
\end{figure}
The JL-(b) nodal-AP scheme (2-D extension of the Gosse-Toscani scheme)  writes, see \cite{glaceap},
\begin{equation}\label{JLb}
\left\{\begin{array}{l}
\ds \mid \Omega_j \mid\partial_t p_j+\frac{a}{\eps}\sum_r(\ljr M_r\mathbf{u}_r,\njr)=0,\\
\ds \mid \Omega_j \mid\partial_t \uj+\frac{a}{\eps}\sum_r\alj M_r(\uj-\mathbf{u}_r)=-\frac{1}{\eps}\left(\sum_r\alj(\id-M_r)\right)\uj,\end{array}\right.
\end{equation}
with the fluxes
\begin{equation}\label{fluxesJL}
\displaystyle\left( \sum_{j}\widehat{\alpha}_{jr}\right)\mathbf{u}_{r}= \sum_{j}l_{jr}p_{j}\mathbf{n}_{jr}+\sum_j\widehat{\alpha}_{jr}\mathbf{u}_{j}\mbox{,}\quad M_r=\left(\sum_j\widehat{\alpha}_{jr}+\frac{\sigma\lambda}{a\eps}\sum_j\widehat{\beta}_{jr}\right)^{-1}\left(\sum_j\widehat{\alpha}_{jr}\right)
\end{equation}
and the tensors
$$
\widehat{\beta}_{jr}=l_{jr}\mathbf{n}_{jr}\otimes(\mathbf{x}_r-\mathbf{x}_j)\mbox{,}\quad\widehat{\alpha}_{jr}=l_{jr}\mathbf{n}_{jr}\otimes\mathbf{n}_{jr}.
$$
This scheme admits the following limit diffusion scheme on coarse grids 
\begin{equation}\label{diffglace}
\left\{ \begin{array}{l}
\displaystyle \left| \Omega_j 
\right| \partial_t p_{j}(t)+\frac{a^2}{\sigma\lambda}\sum_{r} l_{jr}(\mathbf{u}_{r},\mathbf{n}_{jr})=0,\\
\displaystyle \left( \sum_{j}l_{jr}\mathbf{n}_{jr}\otimes(\mathbf{x}_{r}-\mathbf{x}_{j})\right)\mathbf{u}_{r}= \sum_{j}l_{jr}p_{j}\mathbf{n}_{jr}.
\end{array}\right.
\end{equation}
We recall some properties of this scheme:

\begin{itemize}
\renewcommand{\labelitemi}{$\bullet$}
\item The scheme (\ref{JLb}) is stable for the $L^2$ norm \cite{glaceap}.
\item The matrix $A_r=\sum_j l_{jr}\mathbf{n}_{jr}\otimes(\mathbf{x}_{r}-\mathbf{x}_{j})$ is positive and coercive under non restrictive conditions on the meshes.
\item In \cite{glaceap} we prove that the limit diffusion scheme is convergent if the matrix  $A_r$ is coercive.
\item If we implicit the source term of (\ref{JLb}), we observe numerically that the stability CFL condition is independent of $\eps$.
\item These schemes exhibit spurious mods \cite{glaceap}-\cite{glace} which degrade the quality of the numerical solution. A solution,
 based on geometrical corrections, to treat this problem is proposed in \cite{glaceap}.
\item Numerical tests show convergence in all cases.
\end{itemize}

\subsection{Numerical schemes  for the hyperbolic "non-diffusive" part}
We discretize the "non-diffusive"  part (\ref{decompo2}) using a classical hyperbolic scheme. First of all, we recall two different schemes, the upwind scheme and the Rusanov scheme. The upwind scheme in dimension two has been studied in \cite{coudiere}.
Consider
\begin{equation}
 \left\{\begin{array}{l}
\ds\dt\U+M_1\dx\U+M_2\dy\U=\mathbf{0}\\
\ds\U(t=0)=\U_0,\end{array}\right. 
\end{equation}
with two arbitrary real symmetric matrices $M_1$ and $M_2$.

For a cell of index $j$,
$\mathbf{n}_{jk}$ denotes the outward normal associated to the interface $\partial\Omega_{jk}$ between the cell $j$ and  one of its neighbors of index $k$,  
 $G_{jk}=M_1n_{jk}^x+M_2n_{jk}^y=-G_{kj}$ and $l_{jk}=\vert \partial\Omega_{jk}\vert$. 
\begin{definition} The space discretization of the upwind scheme is 
\begin{equation}\label{schemup}
\left|\Omega_j\right|\partial_t\U_j(t)+\sum_k l_{jk}\U_{jk}=\mathbf{0},
\end{equation}
with the fluxes
\begin{equation}\label{fluxup}
\U_{jk}=(G_{jk})^+ \U_j+(G_{jk})^- \U_k,
\end{equation}
where  
the matrices $(G_{jk})^{+,-} $ are the non positive and the non negative parts of the matrix defined by
$G_{jk}^{+,-}=P^{-1}D^{+,-}P$ with $D^{+,-}$ the matrices of the positive and negative eigenvalues of $G_{jk}$ and $P$ is the orthogonal matrix   such that $PG_{jk} P^{-1}$ is diagonal.\\
\end{definition}

 The computation cost associated to the upwind scheme can be important for large linear system. Therefore we propose another choice: the Rusanov scheme. This scheme use only an estimation of the maximal eigenvalue.
\begin{definition} The Rusanov scheme is defined by
\begin{equation}\label{schemrusa}
\mid \Omega_j \mid \partial_t \U_j+\sum_k l_{jk}\U_{jk}=\mathbf{0},
\end{equation}
with  the numerical fluxes given by
\begin{equation}\label{fluxrusa}
\U_{jk}=G_{jk}\frac{\U_j+\U_k}{2}+ S_{jk}\frac{\U_j-\U_k}{2},
\end{equation}
and the local  speed $S_{jk}$ is such that $S_{jk}\geq \max_{i}(\lambda_{jk}^i)$ and $\lambda_{jk}^i$ are the eigenvalues of $G_{jk}$.
\end{definition}

\subsection{Structure of the algorithm}
We are now ready to recapitulate the explicit version of the proposed "diffusive-non diffusive" (\ref{Friedrichss3}) decomposition.
\begin{algo} ~ \\
\begin{itemize}
\renewcommand{\labelitemi}{$\bullet$}
\item Preparation
\begin{itemize}

\item \textbf{Step 1}: We diagonalize in the basis of $R$ the system 
\begin{equation}\label{step1}
\ds\partial_t \mathbf{U}+\frac{1}{\eps}A_1\dx \mathbf{U}+\frac{1}{\eps}A_2\dy \mathbf{U}=-\frac{\sigma}{\eps^2}R\mathbf{U}.
\end{equation}

\item \textbf{Step 2}: We decompose the diagonal system 
\begin{equation}\label{step2}
\ds\partial_t \mathbf{V}+\frac{1}{\eps}A_1^{'}\dx \mathbf{V}+\frac{1}{\eps}A_2^{'}\dy \mathbf{V}=-\frac{\sigma}{\eps^2}D\mathbf{V},
\end{equation}
with $\mathbf{V}=Q^t\mathbf{U}$, $A_1^{'}=Q^tA_1Q$ et $A_2^{'}=Q^tA_2Q$. We obtain
\begin{equation}\label{step2bis}
\ds\partial_t \mathbf{V} + \frac{1 }{\varepsilon}\left( P_{1,x}\partial_x \mathbf{V}+ P_{1,y}\partial_y \mathbf{V} \right)+ \frac{1}{\varepsilon}\left (A_1^{''}\partial_x \mathbf{V}+ A_2^{''}\partial_y \mathbf{V} \right )=-\frac{\sigma }{\varepsilon^2}D \mathbf{V}.
\end{equation}

\item \textbf{Step 3}: The system homogeneous to the hyperbolic heat equation is
\begin{equation}\label{step3}
\ds\partial_t \mathbf{V} + \frac{1 }{\varepsilon}\left( P_{1,x}\partial_x \mathbf{V}+ P_{1,y}\partial_y \mathbf{V} \right)=-\frac{\sigma }{\varepsilon^2}D^{'} \mathbf{V}.
\end{equation}
Using an asymptotic preserving discretization such as   the JL-(b) (\ref{JLb})-(\ref{fluxesJL}) scheme or the $P_1$-MPFA scheme \cite{cemracs}, we define a matrix $P_h$ such that
 \begin{equation}\label{step3dis}
\ds\mathbf{V}_h^{n+1}=\mathbf{V}_h^n+\Delta t P_h\mathbf{V}_h^n
\end{equation}
is an explicit discretization of (\ref{step3}).

\item \textbf{Step 4}: The second system is
\begin{equation}\label{step4}
\ds\partial_t \mathbf{V} +\frac{1}{\varepsilon}\left (A_1^{''}\partial_x \mathbf{V}+ A_2^{''}\partial_y \mathbf{V} \right )=-\frac{\sigma }{\varepsilon^2}D^{''} \mathbf{V}.
\end{equation}
Using a standard finite volume scheme such as Rusanov (\ref{schemrusa})-(\ref{fluxrusa}) or upwind (\ref{schemup})-(\ref{fluxup}), we define a matrix $A_h$ such that 
 \begin{equation}\label{step4dis}
\ds\mathbf{V}_h^{n+1}=\mathbf{V}_h^n+\Delta t A_h\mathbf{V}_h^n
\end{equation}
is an explicit discretization of (\ref{step4}).
\end{itemize}
\item Loop in time
\begin{itemize}
\item \textbf{Step 1}: $\mathbf{V}_h^n=Q^{t}\mathbf{U}_h^n$
\item \textbf{Step 2}: We apply the explicit scheme
 \begin{equation}\label{step5}
\ds\mathbf{V}_h^{n+1}=\mathbf{V}_h^n+\Delta t (P_h+A_h)\mathbf{V}_h^n
\end{equation}
\item \textbf{Step 3}: $\mathbf{U}_h^{n+1}=Q\mathbf{V}_h^{n+1}$
\end{itemize}

\end{itemize}
\end{algo}
\begin{remark}
In this work we diagonalize the system to obtain the primitive variables at each time step. However it is not necessary, we can diagonalize the system only at the first time step.
\end{remark}
\begin{remark}
Usual boundary conditions are easy to incorporate in (\ref{step5}) with standard technics such as the ghost cells method.
\end{remark}

\section{Time discretizations}
Now we quote some remarks about the time discretization. The stability condition of the time scheme associated to the "diffusive - non diffusive" decomposition is given by the stability conditions of each part of the decomposition. Some asymptotic preserving schemes used for the "diffusive" part and classical schemes used for the "non diffusive" part admit CFL conditions dependent of $\eps$. To treat this problem, we can use a fully implicit scheme or design semi-implicit scheme stable on the CFL condition independent of $\eps$ \cite{glaceap}.
Firstly we will study the implicit discretization of the "diffusive - non diffusive" decomposition. 
\subsection{Implicit discretization}
We study the $L^2$ stability of the implicit of the algorithm (\ref{step5}).
The standard $L^2$ norm is 
$||\mathbf{V}||_{L^2}^2=\sum_j|\Omega_j|(\mathbf{V}_j,\mathbf{V}_j)$ and the scalar product is
$(\mathbf{U},\mathbf{V})=\sum_j|\Omega_j|(\mathbf{U}_j,\mathbf{V}_j)$.
Let us assume for simplicity that periodic boundary conditions are used so that $(\mathbf{X},P_h\mathbf{X})\leq0$ 
for all  $\mathbf{X}\in\mathbb{R}^{n\times n_c}$ and $n_c$ is   the number of cells:
this is proved in \cite{glaceap} for the JL-(b) scheme.
Moreover one has  $(\mathbf{X},A_h\mathbf{X})\leq0$ for standard upwind discretizations.

\begin{proposition}
The implicit scheme
\begin{equation}\label{friimp}
M\mathbf{V}_h^{n+1}=M\mathbf{V}_h^{n}+\Delta t P_h\mathbf{V}_h^{n+1}+\Delta t A_h\mathbf{V}_h^{n+1}
\end{equation}
is stable in the norm $L^2(D)$.
\end{proposition}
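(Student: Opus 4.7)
The plan is to run an energy estimate for the scheme, testing against $\mathbf{V}_h^{n+1}$ in the pairing that turns the mass matrix $M$ into the $L^2$ weights. Since the right-hand side is built from two operators each of which is known to be dissipative in the $L^2$ inner product, one expects the implicit step to be unconditionally stable, and the polarization identity should be the only real tool needed.

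Concretely, I would first take the $L^2$ inner product of the scheme (\ref{friimp}) against $\mathbf{V}_h^{n+1}$ using the pairing convention for which $(M \mathbf{X}, \mathbf{Y}) = \sum_j |\Omega_j|(\mathbf{X}_j,\mathbf{Y}_j)$, so that the left-hand side becomes $(\mathbf{V}_h^{n+1} - \mathbf{V}_h^{n}, \mathbf{V}_h^{n+1})_{L^2}$ and the right-hand side becomes $\Delta t (P_h\mathbf{V}_h^{n+1}, \mathbf{V}_h^{n+1}) + \Delta t (A_h\mathbf{V}_h^{n+1}, \mathbf{V}_h^{n+1})$. By the assumptions recalled just before the statement, both terms on the right are non-positive.

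Next, I would apply the elementary identity $(\mathbf{a}-\mathbf{b},\mathbf{a}) = \tfrac12(\|\mathbf{a}\|^2 - \|\mathbf{b}\|^2 + \|\mathbf{a}-\mathbf{b}\|^2)$ to the left-hand side, which gives
\begin{equation*}
\tfrac12\bigl(\|\mathbf{V}_h^{n+1}\|_{L^2}^2 - \|\mathbf{V}_h^{n}\|_{L^2}^2\bigr) + \tfrac12\|\mathbf{V}_h^{n+1}-\mathbf{V}_h^{n}\|_{L^2}^2 \leq 0,
\end{equation*}
and dropping the non-negative jump term on the left yields $\|\mathbf{V}_h^{n+1}\|_{L^2} \leq \|\mathbf{V}_h^{n}\|_{L^2}$, which is exactly $L^2$ stability.

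One small preliminary point is that the implicit step must be well-posed, i.e., that $M - \Delta t P_h - \Delta t A_h$ is invertible; but this falls out of the same energy calculation, since if $(M - \Delta t P_h - \Delta t A_h)\mathbf{X} = \mathbf{0}$, pairing with $\mathbf{X}$ gives $(M\mathbf{X},\mathbf{X}) = \Delta t (P_h\mathbf{X},\mathbf{X}) + \Delta t (A_h\mathbf{X},\mathbf{X}) \leq 0$, forcing $\mathbf{X} = \mathbf{0}$. The only mild obstacle I anticipate is bookkeeping the inner-product conventions so that the dissipativity statements for $P_h$ (from \cite{glaceap}) and for $A_h$ line up with the $L^2$ norm used on the left-hand side; once that is reconciled, the argument above is essentially mechanical.
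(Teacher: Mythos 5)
Your proof is correct and follows essentially the same route as the paper: test the scheme against $\mathbf{V}_h^{n+1}$ and use the non-positivity of $(P_h\cdot,\cdot)$ and $(A_h\cdot,\cdot)$. The only differences are cosmetic bonuses --- you close the estimate with the identity $(\mathbf{a}-\mathbf{b},\mathbf{a})=\tfrac12(\|\mathbf{a}\|^2-\|\mathbf{b}\|^2+\|\mathbf{a}-\mathbf{b}\|^2)$ where the paper uses Cauchy--Schwarz, and you add the (welcome, but omitted in the paper) observation that the same energy argument yields invertibility of $M-\Delta t P_h-\Delta t A_h$.
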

\begin{proof}
By multiplying (\ref{friimp})  by $\mathbf{V}_h^{n+1}$ we obtain
$$
(M\mathbf{V}_h^{n+1},\mathbf{V}_h^{n+1})=(M\mathbf{V}_h^n,\mathbf{V}_h^{n+1})+\Delta t (P_h\mathbf{V}_h^{n+1},\mathbf{V}_h^{n+1})+\Delta t (A_h\mathbf{V}_h^{n+1},\mathbf{V}_h^{n+1}).
$$ 
To conclude, using the inequalities $(\mathbf{V}_h^{n+1},P_h\mathbf{V}_h^{n+1})\leq0$, $(\mathbf{V}_h^{n+1},A_h\mathbf{V}_h^{n+1})\leq0$ and the Cauchy-Schwartz inequality we obtain
$$
||\mathbf{V}^{n+1}||_{L^2}\leq||\mathbf{V}^n||_{L^2}.
$$

\end{proof}
\subsection{Semi-implicit schemes}
We  design semi-implicit schemes modifying the "diffusive - non diffusive" decomposition to obtain a restrictive-less CFL. We  study the scheme for the "diffusive" part.
In 1D the JL-(b) scheme (\ref{JLb}) which is equivalent to the Gosse-Toscani scheme is stable for the $L^{\infty}$ norm under the CFL condition \cite{glaceap}: 
$$
\Delta t\left(\frac{M}{\eps h}+\frac{M\sigma}{\eps^2}\right)\leq 1,
$$
with $M=\frac{2\eps}{2\eps+\sigma h}$. The previous CFL condition is equivalent to
\begin{equation}\label{CFlhyp}
\Delta t\left(\frac{1}{\eps h}\right)\leq 1.
\end{equation}
 If we use a local-implicit discretization for the source term we obtain 
\begin{equation}\label{CFL2}
\Delta t\left(\frac{1}{\eps h+\frac{h^2}{\sigma}}\right)\leq 1.
\end{equation}
A reasonable CFL condition is the sum of the classical hyperbolic CFL condition and the parabolic CFL condition. These remarks show that we can obtain a stability condition independent of $\eps$ for the "diffusive" part using the semi-implicit JL-(b) scheme (extension in 2D of the Gosse-Toscani scheme). However, for the "non diffusive" part the CFL condition of semi-implicit scheme is close to (\ref{CFlhyp}) in 1D.
Therefore we propose to multiply the Rusanov or upwind fluxes by an adapted factor $M$ in the "non diffusive" part and use a local implicit discretization of the source term. This strategy allows to obtain CFL condition close to (\ref{CFL2}) for the complete system.\\
 The factor $M$ depends on hyperbolic system studied and the scheme used. For the Rusanov scheme where the Rusanov velocity is $S_{jk}$ and $\frac{1}{c_o\sigma}$ the diffusion coefficient, the factor $M$ is defined by
$$
M_{jk}=\frac{2S_{jk} \eps}{2S_{jk}\eps+c_o\sigma_{jk}h},
$$
with $h$ a quantity homogeneous to the characteristic length of the mesh. For example we can use $h=d(\mathbf{x}_j,\mathbf{x}_k)$ with $\mathbf{x}_j$, $\mathbf{x}_k$ the center of the cells.\\
For the upwind scheme with a velocity $\lambda_{jk}$ and the same diffusion coefficient $M_{jk}$ is defined by
$$
M_{jk}=\frac{2\lambda_{jk} \eps}{2\lambda_{jk}\eps+c_o\sigma_{jk}h}.
$$

\section{Applications to the $P_N$ models}
The transport of some type of particles is described by the following transport equation with scattering term (for example: radiative transfer equation, neutron transport linear equation)
\begin{equation}\label{transport}
\partial_t f(\mathbf{x},\mathbf{\Omega},t)+\frac{1}{\eps}\mathbf{\Omega}.\nabla f(\mathbf{x},\mathbf{\Omega},t)= \frac{\sigma}{\eps^2}\int_{\mathbb{S}^2}\left(f(\mathbf{x},\mathbf{\Omega}^{'},t)-f(\mathbf{x},\mathbf{\Omega},t)\right)d\mathbf{\Omega}^{'}.
\end{equation}
The $P_N$ systems are obtained expanding the equation (\ref{transport}) on the spherical harmonics basis.
 By construction, the $P_N$ approximation is a Friedrichs system. But simplifications for 2D flows leads to nonsymmetric systems. The 2D form of the $P_N$ equations (see  \cite{brun}-\cite{brun2}-\cite{brunner}) is
\begin{equation}\label{Pnmodel2D}
\left\{\begin{array}{l}
\ds\frac{1}{c}\dt  I_l^{m}+\frac12\dx\left(-C_{l-1}^{m-1} I_{l-1}^{m-1}+D_{l+1}^{m-1} I_{l+1}^{m-1}+E_{l-1}^{m+1} I_{l-1}^{m+1}-F_{l+1}^{m+1} I_{l+1}^{m+1}\right)\\
\ds+\partial_z\left(A_{l-1}^{m} I_{l-1}^{m}+B_{l+1}^{m} I_{l+1}^{m}\right)-\sigma I_l^m=0,\end{array}\right.
\end{equation} 
for $m\neq 0$ and
\begin{equation}\label{Pnmodel2D2}
\left\{\begin{array}{l}
\ds\frac{1}{c}\dt  I_l^{0}+\frac12\dx\left(E_{l-1}^{1} I_{l-1}^{1}+F_{l+1}^{1} I_{l+1}^{1}\right)\\
\ds+\partial_z\left(A_{l-1}^{0} I_{l-1}^{0}+B_{l+1}^{0} I_{l+1}^{0}\right)+\sigma\left(I_0^{0}\delta_{l0}-I_l^0\right)=0,\end{array}\right.
\end{equation} 
for $m=0$.
The coefficients are defined by
\begin{equation}\label{coefPn1}
A_l^m=\ds\sqrt{\frac{(l-m+1)(l+m+1)}{(2l+3)(2l+1)}}\quad B_l^m=\ds\sqrt{\frac{(l-m)(l+m)}{(2l+1)(2l-1)}},
\end{equation}
\begin{equation}\label{coefPn2}
C_l^m=\ds\sqrt{\frac{(l+m+1)(l+m+2)}{(2l+3)(2l+1)}}\quad D_l^m=\ds\sqrt{\frac{(l-m)(l-m-1)}{(2l+1)(2l-1)}},
\end{equation}
\begin{equation}\label{coefPn3}
E_l^m=\ds\sqrt{\frac{(l-m+1)(l-m+2)}{(2l+3)(2l+1)}}\quad F_l^m=\ds\sqrt{\frac{(l+m)(l+m-1)}{(2l+1)(2l-1)}},
\end{equation}
with $A_{l-1}^m=B_l^m$, $C_l^m=F_{l+1}^{m+1}$ and $D_l^m=E_{l-1}^{m+1}$.
The system formed by the equations (\ref{Pnmodel2D})-(\ref{Pnmodel2D2}) is not symmetric. However, by an elementary change of unknowns we obtain a symmetric system. If we note $\tilde{I}_l^m$ the unknowns  defined by:
\begin{itemize}
\item $ \tilde{I}_l^m= I_l^m$,
\item $ \tilde{I}_l^m= -\sqrt{2}I_l^m$,
\end{itemize}
then the $P_N$ system associated to $\tilde{I}_l^m$ is symmetric. \\
The $P_N$ systems satisfy the following properties
\begin{itemize}
\item $R$ is a diagonal matrix. $0$ is an eigenvalue with the multiplicity $1$ and $1$ is an eigenvalue with the multiplicity $n-1$ \cite{brunner}-\cite{brun}.
\item The eigenvalues of the system are included in $]-1,1[$ \cite{brunner}-\cite{brun}.
\item The hypothesis $(H_2)$ is verified (the spherical harmonics form a orthogonal basis for the $L^2$ scalar product).
\end{itemize}
In the numerical test, we use the  $P_3$ model for which
$$
A_1=\left( \begin{array}{cccccccccc}
0 & \sqrt{\frac{1}{3}} & 0 & 0 & 0 & 0 & 0 & 0 & 0 & 0\\
\sqrt{\frac{1}{3}} & 0 & \sqrt{\frac{4}{15}} & 0 & 0 & 0 & 0 & 0 & 0 & 0\\
0 & \sqrt{\frac{4}{15}} & 0 & \sqrt{\frac{9}{35}} & 0 & 0 & 0 & 0 & 0 & 0\\
0 & 0 & \sqrt{\frac{9}{35}} & 0 & 0 & 0 & 0 & 0 & 0 & 0\\
0 & 0 & 0 & 0 & 0 & \sqrt{\frac{1}{5}} & 0 & 0 & 0 & 0\\
0 & 0 & 0 & 0 & \sqrt{\frac{1}{5}} & 0 & \sqrt{\frac{8}{35}} & 0 & 0 & 0\\
0 & 0 & 0 & 0 & 0 & \sqrt{\frac{8}{35}} & 0 & 0 & 0 & 0\\
0 & 0 & 0 & 0 & 0 & 0 & 0 & 0 & \sqrt{\frac{1}{7}} & 0\\
0 & 0 & 0 & 0 & 0 & 0 & 0 & \sqrt{\frac{1}{7}} & 0 & 0\\
0 & 0 & 0 & 0 & 0 & 0 & 0 & 0 & 0 & 0 \end{array}\right),
$$
$$
A_2=\left( \begin{array}{cccccccccc}
0 & 0 & 0 & 0 & \sqrt{\frac{1}{3}} & 0 & 0 & 0 & 0 & 0\\
0 & 0 & 0 & 0 & 0 & \sqrt{\frac{1}{5}} & 0 & 0 & 0 & 0\\
0 & 0 & 0 & 0 & -\sqrt{\frac{1}{15}}  & 0 & \sqrt{\frac{6}{35}} & 0 & 0 & 0\\
0 & 0 & 0 & 0 & 0 & -\sqrt{\frac{3}{35}} & 0 & 0 & 0 & 0\\
\sqrt{\frac{1}{3}} & 0 & -\sqrt{\frac{1}{15}} & 0 & 0 & 0 & 0 & -\sqrt{\frac{1}{5}} & 0 & 0\\
0 & \sqrt{\frac{1}{5}} & 0 & -\sqrt{\frac{3}{35}} & 0 & 0 & 0 & 0 & -\sqrt{\frac{1}{7}} & 0\\
0 & 0 & \sqrt{\frac{6}{35}} & 0 & 0 & 0 & 0 & \sqrt{\frac{1}{70}} & 0 & 0\\
0 & 0 & 0 & 0 & -\sqrt{\frac{1}{5}} & 0 & \sqrt{\frac{1}{70}} & 0 & 0 & -\sqrt{\frac{3}{14}}\\
0 & 0 & 0 & 0 & 0 & -\sqrt{\frac{1}{7}} & 0 & 0 & 0 & 0\\
0 & 0 & 0 & 0 & 0 & 0 & 0 & -\sqrt{\frac{3}{14}} & 0 & 0 \end{array}\right).
$$
\begin{remark}
Since the spherical harmonics are eigenvectors of scattering operators of the form 
\begin{equation}\label{scataniso}
Q(f)=\int_{S^2}p(\mathbf{\Omega},\mathbf{\Omega}^{'})\left(f(t,\mathbf{x},\mathbf{\Omega}^{'})-f(t,\mathbf{x},\mathbf{\Omega})\right) d\mathbf{\Omega}^{'}
\end{equation}
or 
\begin{equation}\label{scaLB}
Q(f)=\triangle_{\mathbf{\Omega}}f(t,\mathbf{x},\mathbf{\Omega}),
\end{equation}
where $\triangle_{\mathbf{\Omega}}$ is the Laplace-Beltrami operator defined on the sphere and $p(\mathbf{\Omega},\mathbf{\Omega}^{'})$ is an angular repartition function, therefore the "diffusive - non diffusive" decomposition for $P_N$ models associated to the transport equation with these operators is still valid since the assumption $(H_2)$ is verified.    

\end{remark}

\section{Applications to the $S_N$ models}
The $S_N$ models for the transport equation (\ref{transport}) are defined by
$$
\partial_t f_i+\frac{1}{\eps}\mathbf{\Omega}_i.\nabla f_i=-\frac{\sigma}{\eps^2}(f_i-\sum_j f_jw_j),
$$
with $f_i=f(\mathbf{\Omega}_i)$, $\mathbf{\Omega}_i$ a discrete direction and $w_i$ and quadrature weight.
$$
\ds\sum_j w_j=1,\, \sum_j w_j \mathbf{\Omega}_j=\mathbf{0},\, \sum_j w_j \mathbf{\Omega}_j\otimes\mathbf{\Omega}_j=D_c\id,
$$
and $D_c=\frac13$ if the velocities are defined in $\mathbb{S}^2$ and $D_c=\frac12$ if the velocities are defined in $\mathbb{S}^1$.
Usually the quadrature formula is symmetric with respect the rotation of the axis. 
These systems admit the following diffusion limit
\begin{equation}\label{difflimSn}
\partial_tE-\operatorname{div}\left(D\nabla E\right)=0
\end{equation}
with $E=\sum_j w_j f_j$ and $D=\frac{1}{\sigma}\sum_j w_j \mathbf{\Omega}_j\otimes\mathbf{\Omega}_j$.
\begin{proposition} The $S_N$ models can be formulate to
$$
\dt\U+\frac{1}{\eps}A_1\dx\U+\frac{1}{\eps}A_2\dy\U=-\frac{\sigma}{\eps^2}R\U,
$$
with $U_j=\sqrt{w_j}f_j$ for each $j$ and $R=\id-\sqrt{\mathbf{w}}\otimes\sqrt{\mathbf{w}} $.
The vector $\sqrt{\mathbf{w}}$ is given by the the square root of $w_j$.\\
This system satisfies the following properties
\begin{itemize}
\item $\operatorname{dim} \operatorname{Ker} R=1$,
\item $A_1$ and $A_2$ are diagonals,
\item $0$ is an eigenvalue of $R$ with the multiplicity 1 and the eigenvector $\mathbf{E}_1=(\sqrt{w_1},....,\sqrt{w_n})$,
\item $1$ is an eigenvalue of $R$ with the multiplicity $n-1$,
\item The matrix $R$ is symmetric with real coefficients,
\item  $A_1\mathbf{E}_1=a \mathbf{E}_2\mbox{, }A_2\mathbf{E}_1=a \mathbf{E}_3$.
\end{itemize}
\end{proposition}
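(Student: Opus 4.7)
The plan is to reduce each claim to a direct computation using the three moment conditions $\sum_j w_j = 1$, $\sum_j w_j \mathbf{\Omega}_j = \mathbf{0}$, and $\sum_j w_j \mathbf{\Omega}_j \otimes \mathbf{\Omega}_j = D_c \widehat{I}_d$. First I would change unknowns $U_j = \sqrt{w_j} f_j$ in the $S_N$ equation; multiplying the $i$-th line by $\sqrt{w_i}$ converts the scattering source $f_i - \sum_j w_j f_j$ into $U_i - \sqrt{w_i}\sum_j \sqrt{w_j}\,U_j$, which is exactly $(\widehat{I}_d - \sqrt{\mathbf{w}}\otimes\sqrt{\mathbf{w}})\mathbf{U}$. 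Because each advection term $\mathbf{\Omega}_i\cdot\nabla f_i$ involves only the single unknown $f_i$, the matrices $A_1$ and $A_2$ inherit diagonal structure, with diagonal entries $\Omega_i^x$ and $\Omega_i^y$ respectively. Symmetry of $R$ is immediate from the rank-one outer product form, and reality is obvious.

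Next I would verify the spectrum of $R$. Since $\sum_j w_j = 1$, the vector $\mathbf{E}_1 = \sqrt{\mathbf{w}}$ has unit norm, so $R = \widehat{I}_d - \mathbf{E}_1 \otimes \mathbf{E}_1$ is the orthogonal projector onto $\mathbf{E}_1^\perp$ subtracted from the identity; equivalently, $R$ is the orthogonal projector onto $\mathbf{E}_1^\perp$ itself. This instantly yields $R\mathbf{E}_1 = 0$, $Rv = v$ for any $v \perp \mathbf{E}_1$, hence $\dim\ker R = 1$ and the eigenvalue $1$ has multiplicity $n-1$.

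The key identity is $A_1\mathbf{E}_1 = a\,\mathbf{E}_2$. A direct computation gives $(A_1\mathbf{E}_1)_i = \Omega_i^x \sqrt{w_i}$. Using the first moment condition,
\begin{equation*}
(\mathbf{E}_1, A_1\mathbf{E}_1) = \sum_i w_i \Omega_i^x = 0,
\end{equation*}
so $A_1\mathbf{E}_1 \in (\ker R)^\perp$, i.e.\ it lies in the eigenspace for eigenvalue $1$. Its squared norm is $\sum_i w_i (\Omega_i^x)^2 = D_c$ by the diagonal entry of the second moment identity, so I set $a = \sqrt{D_c}$ and $\mathbf{E}_2 = A_1\mathbf{E}_1/\sqrt{D_c}$ (a unit eigenvector). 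The same argument produces $\mathbf{E}_3 = A_2\mathbf{E}_1/\sqrt{D_c}$; the off-diagonal entry $\sum_i w_i \Omega_i^x\Omega_i^y = 0$ of the second moment identity then forces $(\mathbf{E}_2,\mathbf{E}_3) = 0$, so $\mathbf{E}_2$ and $\mathbf{E}_3$ are linearly independent and can be completed to an orthonormal eigenbasis of $R$.

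I do not anticipate a serious obstacle: every assertion collapses onto one of the three moment identities for the quadrature. The only point requiring a little care is the normalization constant $a$, which must be the same for both $A_1\mathbf{E}_1$ and $A_2\mathbf{E}_1$; this is guaranteed precisely because the quadrature is isotropic, so that the diagonal entries of $\sum_j w_j \mathbf{\Omega}_j\otimes \mathbf{\Omega}_j$ coincide and equal $D_c$. Once $\mathbf{E}_2,\mathbf{E}_3$ are identified in this way, all listed properties are established, and (together with $\lambda_2 = \lambda_3 = 1$) assumption $(H_2)$ is satisfied, placing the $S_N$ system within the framework of the discretization strategy developed above.
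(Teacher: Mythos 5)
Your proof is correct and follows essentially the same route as the paper: identify $R=\widehat{I}_d-\sqrt{\mathbf{w}}\otimes\sqrt{\mathbf{w}}$ as the orthogonal projector onto $\sqrt{\mathbf{w}}^{\perp}$ to read off the spectrum, use the first moment condition to place $A_k\mathbf{E}_1$ in $(\operatorname{Ker}R)^{\perp}$, and use the isotropy of the second moment to get the common constant $a=\sqrt{D_c}$. If anything you are slightly more complete than the paper (you carry out the change of unknowns explicitly and check $(\mathbf{E}_2,\mathbf{E}_3)=0$), and your normalization $\mathbf{E}_2=A_1\mathbf{E}_1/\sqrt{D_c}$, whose $i$-th component carries the factor $\sqrt{w_i}\,\Omega_i^x$, is the correct one --- the paper's displayed formula for $\mathbf{E}_2$, $\mathbf{E}_3$ omits the $\sqrt{w_i}$ weights, without which orthogonality to $\mathbf{E}_1$ would fail.
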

\begin{proof}
We first prove that $1$ is an eigenvalue with the multiplicity $n-1$. We notice that $\id-\sqrt{\mathbf{w}}\otimes\sqrt{\mathbf{w}}$ corresponds to the orthogonal projection on the hyperplane orthogonal to the vector $\mathbf{\sqrt{w}}$. 
 Therefore $1$ is the eigenvalue of the matrix $R$ with the multiplicity $n-1$.
The projection in the space generate by $\mathbf{\sqrt{w}}$ is equal to zero, thus $0$ is an eigenvalue of $R$ associated to the eigenvector $\mathbf{E}_1=\mathbf{\sqrt{w}}$.
In a second step, we show that the last property of the proposition 6.1 is verified. The condition under the quadrature point $\sum_i w_i\mathbf{\Omega}_i=\mathbf{0}$ imply
that $(A_1\mathbf{\sqrt{w}},\mathbf{\sqrt{w}})=0$ and $(A_2\mathbf{\sqrt{w}},\mathbf{\sqrt{w}})=0$. Consequently $A\mathbf{E}_1\in\operatorname{Ker}(R)^{\perp}$ and $A\mathbf{E}_2\in\operatorname{Ker}(R)^{\perp}$.
Using 
$$
\mathbf{E}_2=\frac{1}{\sqrt{\ds\sum_iw_i\Omega_i^{x,2}}}\left(\begin{array}{c}
\Omega_1^x\\
\vdots\\
\Omega_n^x\end{array}\right)\mbox{, }\mathbf{E}_3=\frac{1}{\sqrt{\ds\sum_i w_i\Omega_i^{y,2}}}\left(\begin{array}{c}
\Omega_1^y\\
\vdots\\
\Omega_n^y\end{array}\right).
$$
we obtain $A_1\mathbf{E}_1=a \mathbf{E}_{2}\mbox{, }A_2\mathbf{E}_1=a \mathbf{E}_{3}$ with $a=\sqrt{\sum_iw_i\Omega_i^{x,2}}=\sqrt{\sum_iw_i\Omega_i^{y,2}}$.
The equality $\sqrt{\sum_iw_i\Omega_i^{x,2}}=\sqrt{\sum_iw_i\Omega_i^{y,2}}$  comes from 
$$
\sum_iw_i\mathbf{\Omega}_i\otimes\mathbf{\Omega}_i=D_c\id,
$$
with $D_c=\frac13$ or $D_c=\frac12$.
\end{proof}
\begin{remark}
In dimension one, the $S_N$ models with Gauss-Legendre quadrature, are equivalent to the $P_N$ models. 
\end{remark}

\begin{remark}
Unlike  the case of the $P_N$ model, the assumptions $(H_2)$ are not satisfied  for the anisotropic scattering (\ref{scataniso}). 
\end{remark}

The $S_2$ model used in the numerical examples writes
$$
\dt\U+\frac{1}{\eps}A_1\dx\U+\frac{1}{\eps}A_2\dy\U=-\frac{\sigma}{\eps^2}R\U,
$$
with
\begin{equation}\label{matrixs2}
A_1=\left( \begin{array}{cccc}
1 & 0 & 0 & 0 \\
0& 0 & 0 & 0 \\
0 & 0 & -1 & 0 \\
0 & 0 & 0 & 0 \end{array}\right),\quad
A_2=\left( \begin{array}{cccc}
0 & 0 & 0 & 0 \\
0 & 1 & 0 & 0 \\
0 & 0 & 0 & 0 \\
0 & 0 & 0 & -1 \end{array}\right),\quad
R=\left( \begin{array}{cccc}
\frac34 & -\frac14 & -\frac14 & -\frac14 \\
-\frac14 & \frac34 & -\frac14 & -\frac14 \\
-\frac14 & -\frac14 & \frac34 & -\frac14 \\
-\frac14 & -\frac14 & -\frac14 & \frac34 \end{array}\right)
\end{equation}
and the diffusion limit is 
$$
\partial_t E-\operatorname{div}\left(\frac{1}{2\sigma}\nabla E \right)=0
$$
with $E=(\frac14\sum_j U_j)$.
Defining the orthogonal matrix $Q
$ and $D$ the diagonal matrix by
$$
Q=\left( \begin{array}{cccc}
\frac12 & \frac{1}{\sqrt{2}} & 0 & \frac12 \\
\frac12 & 0 & \frac{1}{\sqrt{2}}  & -\frac12 \\
\frac12 & -\frac{1}{\sqrt{2}}  & 0 & \frac12 \\
\frac12 & 0 & -\frac{1}{\sqrt{2}}  & -\frac12 \end{array}\right),\quad
D=\left( \begin{array}{cccc}
0 & 0 & 0 & 0 \\
0 & 1 & 0 & 0 \\
0 & 0 & 1 & 0 \\
0 & 0 & 0 & 1 \end{array}\right)
$$
then in the unknowns   $\mathbf{V}=Q^t\U$ the  system writes
\begin{equation}\label{fri6bis}
\dt\mathbf{V}+\frac{1}{\eps}A_1^{'}\dx\mathbf{V}+\frac{1}{\eps}A_2^{'}\dy\mathbf{V}+=-\frac{\sigma}{\eps^2}D\mathbf{V},
\end{equation}
with
$$
A_1^{'}=Q^tA_1Q=\left( \begin{array}{cccc}
0 & \frac{1}{\sqrt{2}} & 0 & 0 \\
\frac{1}{\sqrt{2}} & 0 & 0 & \frac{1}{\sqrt{2}} \\
0 & 0 & 0 & 0 \\
0 & \frac{1}{\sqrt{2}} & 0 & 0 \end{array}\right),\quad
A_2^{'}=Q^tA_2Q=\left( \begin{array}{cccc}
0 & 0 & \frac{1}{\sqrt{2}} & 0 \\
0 & 0 & 0 & 0 \\
\frac{1}{\sqrt{2}} & 0 & 0 & -\frac{1}{\sqrt{2}} \\
0 & 0 & -\frac{1}{\sqrt{2}} & 0 \end{array}\right).
$$
\section{Numerical results}
In this section we describe numerical results obtained for the three models, the hyperbolic heat equation (equivalent to $P_1$), $P_3$ and $S_2$ described previously. We give the results for both the diffusion and the transport regimes. For each model the results are obtained for 3 types of unstructured meshes as illustrated in figures 3 and 4. In this section, contour plots are for the first moment of the solution that is $\rho=(U,E_1)$ and we recall that  $E_1$ is the basis of the kernel of $R$.
\begin{figure}[ht!]
\begin{center}
\includegraphics[scale=0.5]{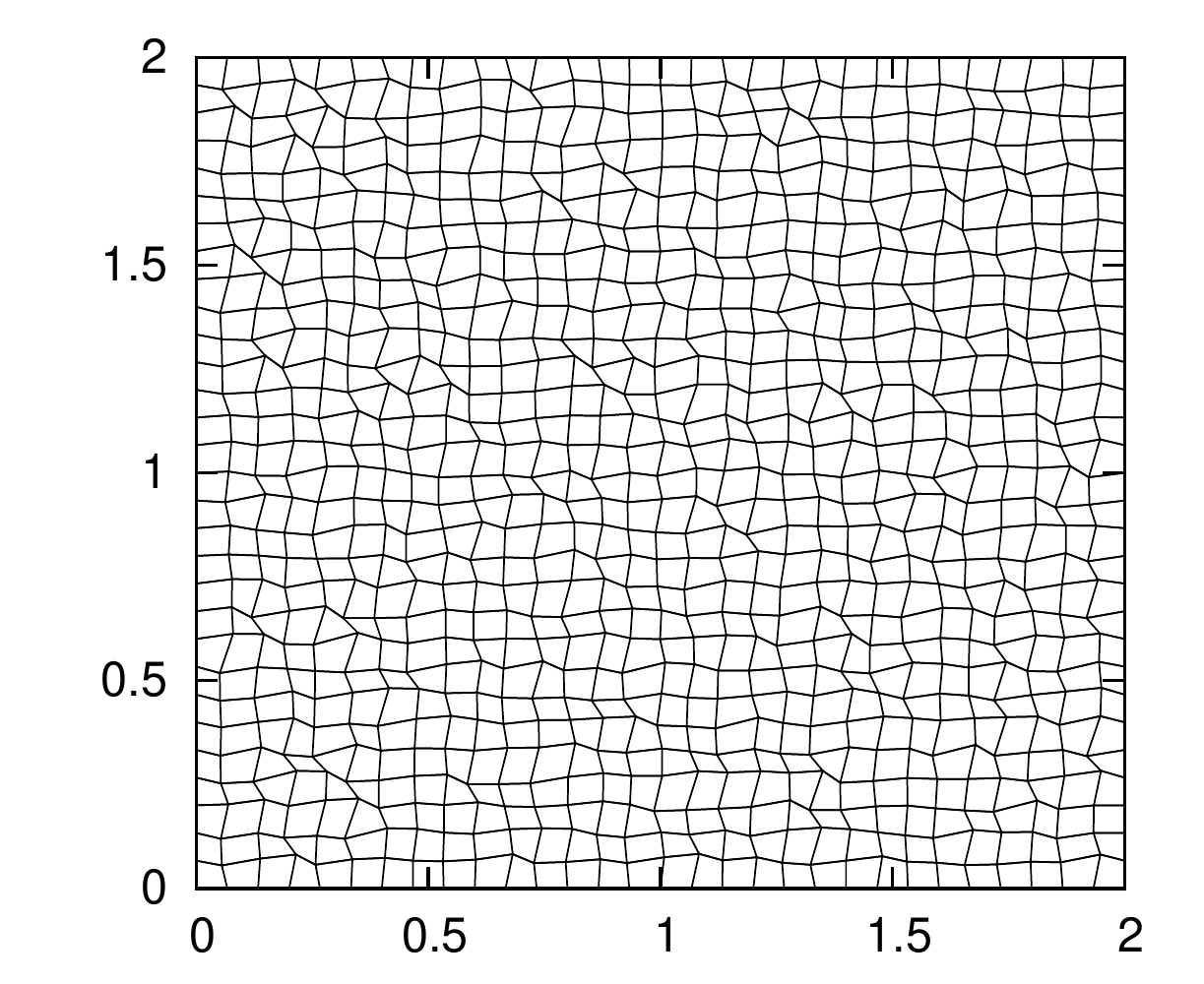}\includegraphics[scale=0.5]{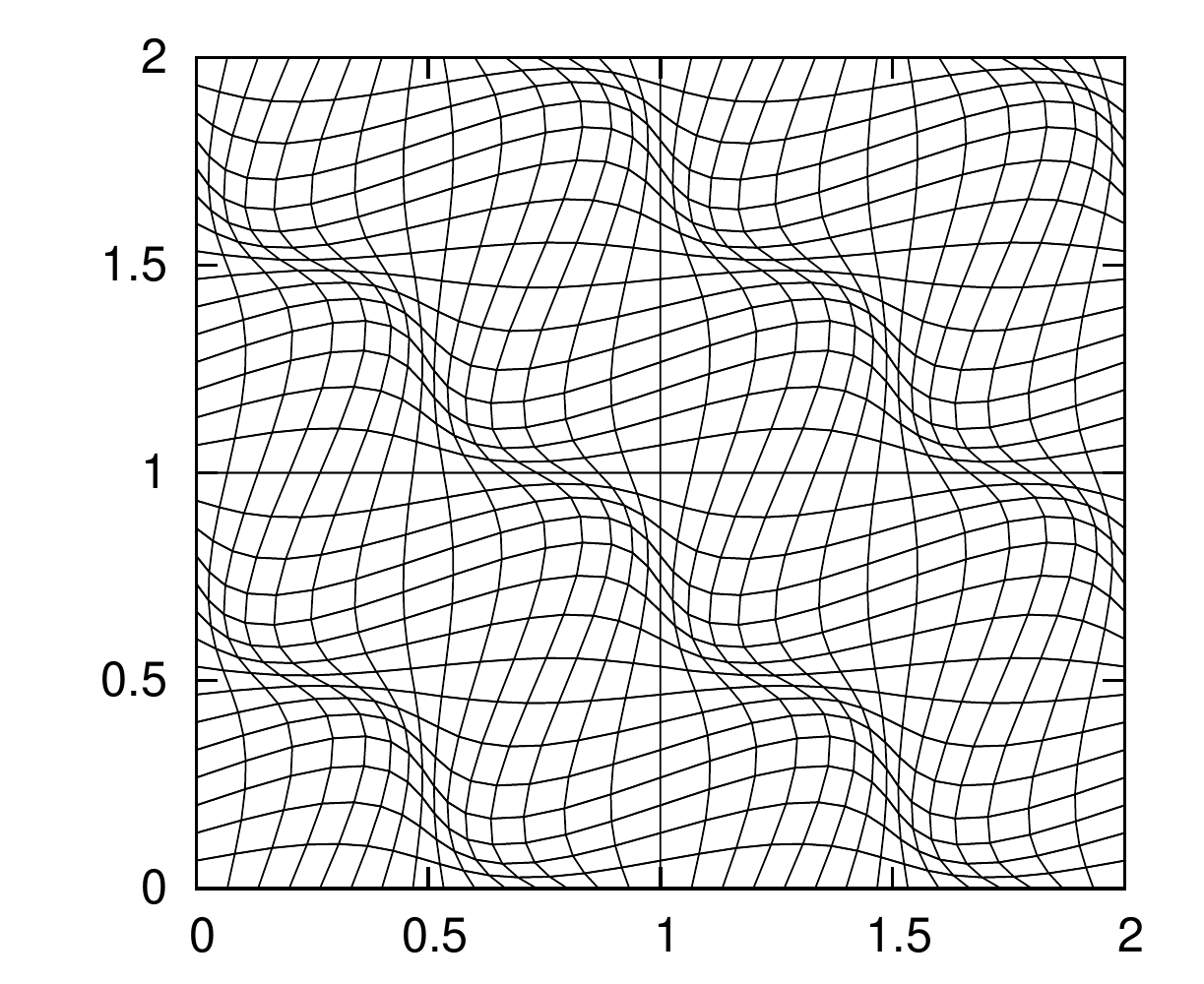}
\end{center}
  \caption{Unstructured quadrangular meshes}
\label{mesh1}
\end{figure}

\begin{figure}[ht!]
\begin{center}
\includegraphics[scale=0.5]{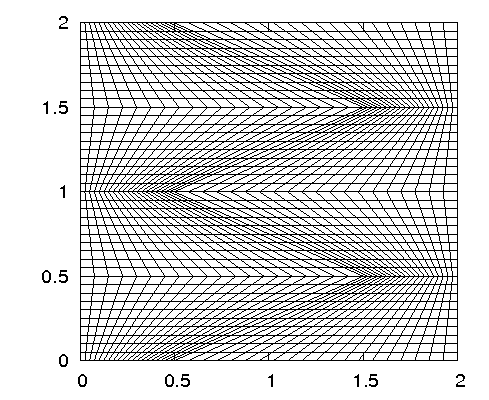}
\end{center}
  \caption{Kershaw mesh}
\label{mesh3}
\end{figure}
\subsection{The $P_1$ model}
We illustrate the behavior of the asymptotic preserving discretization in 2D. We use (\ref{JLb}) to solve the $P_1$ model and we compare with an exact diffusion solution.\\
The test case is based on the fundamental solution of the heat equation with a diffusion coefficient equal to one, called $SF(t)$ \cite{pde}. The initial datas are $U_1(t=0)=SF(0.01)$ and $U_2(t=0)$=0, $U_3(t=0)$=0. The diffusion solution is given by $U_1(t)=S(0.01+t)$, $U_2(t)=0$, $U_3(t)=0$.
We compare the exact diffusion solution, the solution obtained with the scheme without AP corrector, the solution obtained with the scheme with AP corrector which admits a non consistent TPFA diffusion scheme \cite{glaceap} and the solution obtained with the consistent asymptotic preserving scheme (\ref{JLb}). The exact diffusion solution is plotted on Cartesian mesh with 150 cells for each direction. The numerical solutions are computed on Kershaw mesh with 150 cells for each direction and $\eps=0.0001$.
\begin{figure}[ht!]
\begin{center}
\includegraphics[scale=0.6]{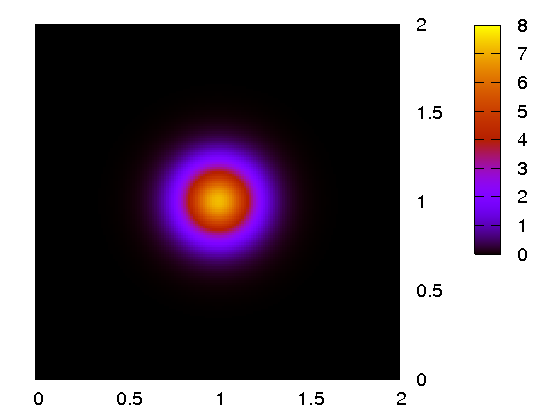}\includegraphics[scale=0.6]{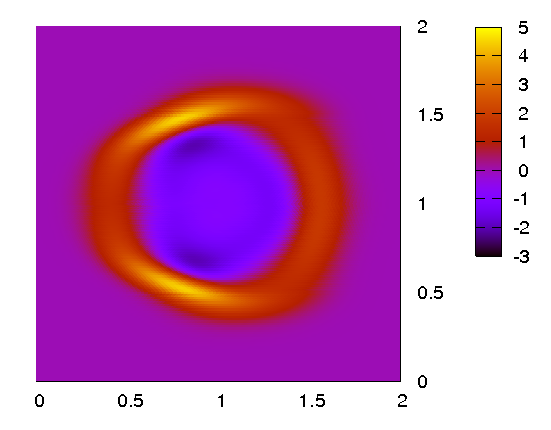}
\end{center}
  \caption{On the left, we plot the first moment $\rho$ of the exact solution of the test case at the time $t=0.01$. On the right, we plot $\rho$ obtained by the classical upwind scheme at the time $t=5\times 10^{-5}$}\label{fond1}
\end{figure}
\begin{figure}[ht!]
\begin{center}
\includegraphics[scale=0.6]{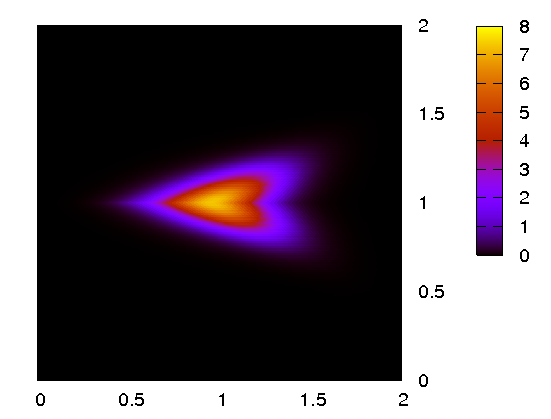}\includegraphics[scale=0.6]{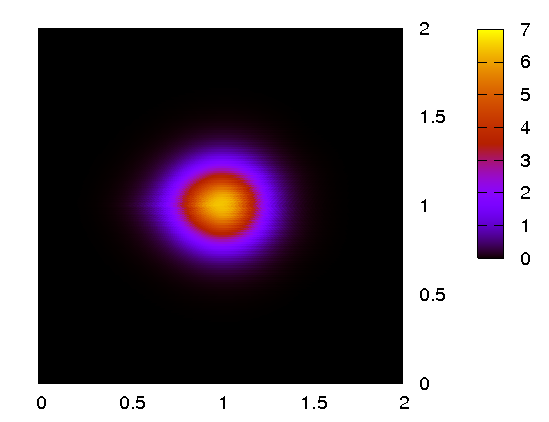}
\end{center}
  \caption{We plot the first moment $\rho$ of the solution at $t=0.01$ with two different schemes. In the left the scheme is a naive multidimensional extension of the usual 1-d AP scheme: this scheme  is validated only on Delaunay meshes \cite{glaceap}. In the right the scheme is the nodal JL-(b) (\ref{JLb}-\ref{fluxesJL}) asymptotic preserving which is convergent on general meshes.} \label{fond2}
\end{figure}
When we solve this problem with the classic upwind scheme (fig. \ref{fond1}), we do not capture correctly the dynamic of the solution. For the TPFA-asymptotic preserving scheme (left on fig.  \ref{fond2}), the quality of numerical solution is very dependent of the deformation of the mesh and the symmetry of the solution is not preserved. The numerical solution given by the nodal AP scheme is close to the exact solution. The quality of the numerical solution is not very sensitive to the mesh deformations. 

\subsection{The $S_2$ model}
We solve the Friedrichs $S_2$ system (\ref{fri1}) with the matrices (\ref{matrixs2}).
The numerical scheme for the "diffusive" part is (\ref{JLb})-(\ref{fluxesJL}) and we use an implicit time discretization.
We define the first moment $E=\sum_i^4 w_i U_i$ with $\U=(U_{i=1},...,U_{i=4})$.
\subsubsection{Numerical results in the diffusion regime}
We note $SF_2(t)$  the fundamental solution of the heat equation with a diffusion coefficient $\frac12$. At the time zero, each unknown $U_i$ is equal at $SF_2(0.01)$. The solution at the time $T_f=0.01$ is $SF_2(0.01+T_f)$. The model is discretized with the JL-(b) nodal scheme for the "diffusive" part the upwind scheme for the other part and an implicit time discretization. We obtain the following results for the convergence.
\begin{table}[ht!]
\begin{center}
\begin{tabular}{|c|c|c|c|c|}
\hline 
cells /$\eps$ & $\epsilon=10^{-3}$ & $\epsilon=10^{-4}$ & $\epsilon=10^{-6}$ &  $\epsilon=10^{-7}$\tabularnewline
\hline
\hline 
40-80 & 2.00 & 1.98 & 1.99 & 1.99\tabularnewline
\hline 
80-160 & 1.80 & 1.97 & 2. & 2\tabularnewline
\hline  
160-320 & 1.69 & 1.97 & 2.01 & 2.01\tabularnewline
\hline
\end{tabular}
\caption{Order of convergence for the $S_2$ scheme on Cartesian mesh}
\label{tabS2111}
\end{center}
\end{table}
\begin{table}[ht!]
\begin{center}
\begin{tabular}{|c|c|c|c|c|}
\hline 
cells/$\eps$ & $\epsilon=10^{-3}$ & $\epsilon=10^{-4}$ & $\epsilon=10^{-6}$ &  $\epsilon=10^{-7}$\tabularnewline
\hline
\hline 
40-80 & 1.92 & 1.99 & 2.00 & 2.00\tabularnewline
\hline 
80-160 & 1.88 & 2.02 & 2.03 & 2.03\tabularnewline
\hline 
160-320 & 1.76 & 2.01 & 2.04 & 2.03\tabularnewline
\hline
\end{tabular}
\caption{Order of convergence for the $S_2$ scheme on random quadrangular mesh}
\label{tabS22}
\end{center}
\end{table}
\begin{table}
\begin{center}
\begin{tabular}{|c|c|c|c|c|}
\hline 
cells /$\eps$ & $\epsilon=10^{-3}$ & $\epsilon=10^{-4}$ & $\epsilon=10^{-6}$ &  $\epsilon=10^{-7}$\tabularnewline
\hline
\hline 
40-80 & 1.89 & 1.96 & 1.96 & 1.96\tabularnewline
\hline 
80-160 & 1.84 & 1.94 & 1.96 & 1.96\tabularnewline
\hline 
160-320 & 1.79 & 1.97 & 1.99 & 1.99\tabularnewline
\hline
\end{tabular}
\caption{Order of convergence for the $S_2$ scheme on "smooth" mesh}
\label{tabS23}
\end{center}
\end{table}
\begin{table}[ht!]
\begin{center}
\begin{tabular}{|c|c|c|c|c|}
\hline 
cells/$\eps$ & $\epsilon=10^{-3}$ & $\epsilon=10^{-4}$ & $\epsilon=10^{-6}$ &  $\epsilon=10^{-7}$\tabularnewline
\hline
\hline 
40-80 & 1.89 & 1.96 & 1.96 & 1.96\tabularnewline
\hline 
80-160 & 1.84 & 1.94 & 1.96 & 1.96\tabularnewline
\hline 
160-320 & 1.79 & 1.97 & 2.00 & 1.99\tabularnewline
\hline
\end{tabular}
\caption{Order of convergence for the $S_2$ scheme on Kershaw mesh}
\label{tabS24}
\end{center}
\end{table}

\begin{table}[ht!]
\begin{center}
\begin{tabular}{|c|c|c|c|c|}
\hline 
cells /$\eps$ & $\epsilon=10^{-3}$ & $\epsilon=10^{-4}$ & $\epsilon=10^{-6}$ &  $\epsilon=10^{-7}$\tabularnewline
\hline
\hline 
40-80 & 1.98 & 2.02 & 2.02 & 2.02\tabularnewline
\hline 
80-160 & 1.91 & 1.99 & 2.00 & 2.00\tabularnewline
\hline 
160-320 & 1.83 & 2.01 & 2.01 & 2.01\tabularnewline
\hline
\end{tabular}
\caption{Order of convergence for the $S_2$ scheme on regular triangular mesh}
\label{tabS25}
\end{center}
\end{table}
\begin{table}[ht!]
\begin{center}
\begin{tabular}{|c|c|c|c|c|}
\hline 
cells /$\eps$ & $\epsilon=10^{-3}$ & $\epsilon=10^{-4}$ & $\epsilon=10^{-6}$ &  $\epsilon=10^{-7}$\tabularnewline
\hline
\hline 
40-80 & 1.65 & 1.65 & 1.65 & 1.65\tabularnewline
\hline 
80-160 & 1.39 & 1.38 & 1.38 & 1.38\tabularnewline
\hline 
160-320 & 1.26 & 1.25 & 1.25 & 1.25\tabularnewline
\hline
\end{tabular}
\caption{Order of convergence for the $S_2$ scheme on random triangular mesh}
\label{tabS26}
\end{center}
\end{table}
The tables (\ref{tabS2111})-(\ref{tabS22})-(\ref{tabS24})-(\ref{tabS25})-(\ref{tabS26}) give the order of convergence for some meshes and values of $\eps$. In the diffusion regime the numerical method converges with the second order.\\
These results deserve some remarks. The order of convergence for $\eps=0.001$ and $\eps=0.0001$ decreases a little when the number of cells increase. This phenomena comes from the fact that we compare the numerical solution of the $S_2$ with the exact solution of the diffusion equation. But the exact diffusion is an approximation of the $S_2$ solution with an error homogeneous to $\eps$. Therefore when the numerical error is close to $\eps$, it is not justified to compare the error numerical with the diffusion solution.\\

\subsubsection{Transport test case}
We verify here that the "diffusive - non diffusive" decomposition and the AP corrector do not disturb the convergence in the transport regime.\\
\textbf{Test case 1}\\
It is a classical transport case. 
The quantities are initialized by $U_1=\chi_{\left[0.4,0.6\right]^2}$ and $U_i=0$ for $i>1$. We define $\si=0$ and $\eps=1$. The solution for $U_1(t,\mathbf{x})$ is the initial solution advected with the velocity $(1,0)$, the other variables are equal to zero. The final time is $T_f=0.1$. Since the initial data is discontinuous the theoretical order is  0.5 for the norm $L^1$.
We show the order for the variable $E=\sum_i w_i U_i$ in table \ref{s2t1}.

\begin{table}[ht!]
\begin{center}
\begin{tabular}{|c|c|c|c|}
\hline
Meshes\ order & 40-80 & 80-160 & 160-320  \tabularnewline
\hline
Cartesian mesh  & 0.45 & 0.48 & 0.51  \tabularnewline
\hline
Random quad. mesh  & 0.47 & 0.48 & 0.50   \tabularnewline
\hline
Smooth mesh,  & 0.47 & 0.46 & 0.47   \tabularnewline
\hline
Regular trig. mesh  & 0.48 & 0.48 & 0.48   \tabularnewline
\hline
Random rig. mesh  & 0.49 & 0.47 & 0.47  \tabularnewline
\hline
Kershaw mesh  & 0.38 & 0.42 & 0.43  \tabularnewline
\hline
\end{tabular}
\end{center}
\caption{Order of convergence for the $S_2$ scheme for the test case 1}
\label {s2t1}
\end{table}
~\\
\textbf{Test case 2}\\
We note $G(\mathbf{x})$ a Gaussian function. The initial data are given by $U_i(\mathbf{x},t=0)=G(\mathbf{x})$ and the parameters are defined by $\si=0$ and $\eps=1$. The solution corresponds to the advection of four Gaussian functions with advection velocities $(0,1)$, $(0,-1)$, $(1,0)$ et $(-1,0)$. The final time is  0.2.
We compare the exact and numerical solutions for the quantity $E=\frac{1}{4}\sum_{i=1}^4 U_i$.
\begin{figure}[ht!]
\begin{center}
\includegraphics[scale=0.35]{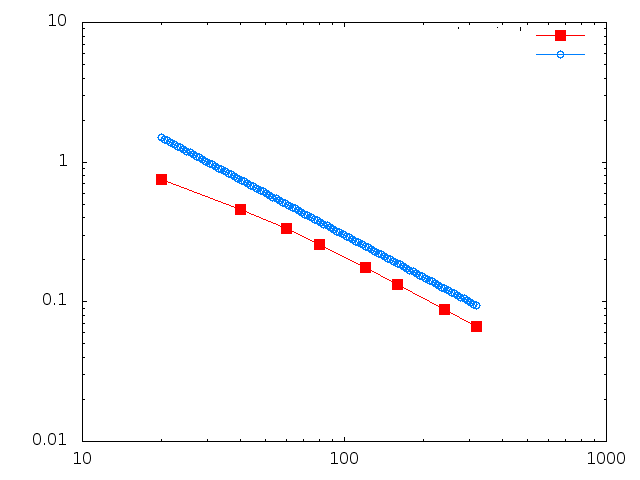}\includegraphics[scale=0.35]{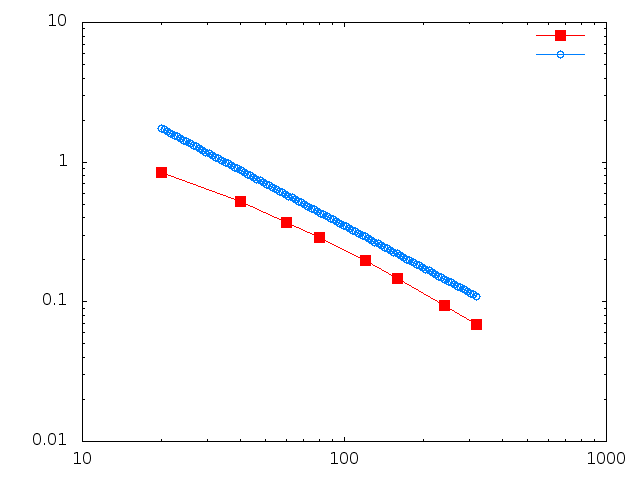}
\end{center}
  \caption{The red curve with square correspond to the numerical error on Cartesian mesh (left) and on random mesh (right). The blue curve with circle correspond to the function $\frac{1}{h}$.}\label{s2t2}
\end{figure}
For this test case, the scheme  converges with the first order as  can seen on figure \ref{s2t2}. \\\\
\textbf{Test case 3}:\\
The initial data is $U_i=\delta_{1,1}$ with $\delta_{1,1}$ a Dirac function centered in $x=1$ and $ y=1$. We take  $\eps=1$ and $\si=1$. The analytical solution is constructed with $4$ Dirac functions advected in each direction. We use a random quadrangular mesh. The result is computed using the stabilized-nodal scheme (without spurious modes, see \cite{glaceap}) for the "diffusive" part. The result is given by the figure (\ref{s2t3}).
\begin{figure}[ht!]
\begin{center}
\includegraphics[scale=0.6]{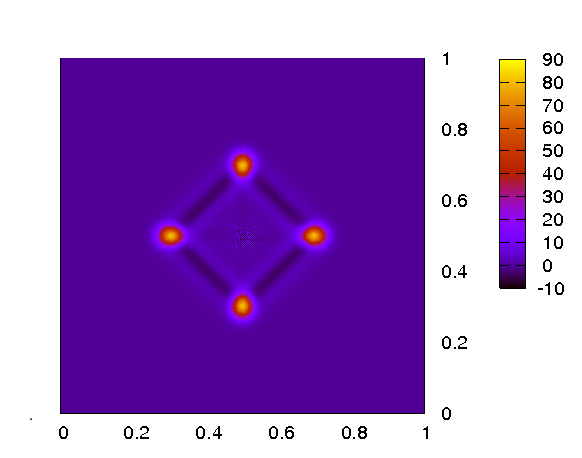}
\end{center}
  \caption{First moment $\rho$  of the fundamental solution of $S_2$ model}\label{s2t3}
\end{figure}
\begin{remark}
The last test case allows to exhibit a default of the "diffusive-non diffusive" decomposition. Indeed the $S_N$ model preserves the positivity of the discrete distribution function associated to the linear kinetic equation, consequently all the unknowns are positive. This property is not preserve at the discrete level.
\end{remark}
\subsection{The $P_3$ model}
In this subsection we validate our numerical method for the $P_3$ system. We verify the convergence in the diffusion limit. After we propose some test cases in the transport regime. 
\subsubsection{Numerical results for $P_3$ in diffusion limit}
Let $SF_3(t)$ be the fundamental solution of the heat equation with a diffusion coefficient of $\frac13$. The initial data is $U_1(t=0)=SF_3(0.01)$ and $U_i(t=0)=0$ for $i$ different of zero. The final time is $T_f=0.01$. The solution, at the final time, is the fundamental solution at $t=0.02$. We provide convergence order for implicit scheme and semi-implicit scheme obtained using the semi-implicit JL-(b) nodal scheme for the "diffusive" part and a modified Rusanov scheme for the other part (see subsection 4.2). The time step is given by $\Delta t=\frac12 h^2$ with $h$ the step mesh.

\begin{table}[ht!]
\begin{center}
\begin{tabular}{|c||c|c|c|c|}
\hline
\multicolumn{1}{|c||}{Semi-implicit time discretization} & \multicolumn{2}{c}{~} & \multicolumn{2}{c|}{~}\\ 
\hline 
\hline
cells /$\eps$ & $\epsilon=10^{-3}$ & $\epsilon=10^{-4}$ & $\epsilon=10^{-6}$ &  $\epsilon=10^{-7}$\tabularnewline
\hline 
40-80 & 1.91 & 2.01 & 2.02 & 2.02\tabularnewline
\hline 
80-160 & 1.81 & 1.98 & 2.00 & 2.00\tabularnewline
\hline 
160-320 & 1.66 & 1.96 & 2.00 & 2.00\tabularnewline
\hline
\hline
\multicolumn{1}{|c||}{Implicit time discretization} & \multicolumn{2}{c}{~} & \multicolumn{2}{c|}{~}\\ 
\hline 
\hline 
cells/$\eps$ & $\epsilon=10^{-3}$ & $\epsilon=10^{-4}$ & $\epsilon=10^{-6}$ &  $\epsilon=10^{-7}$\tabularnewline
\hline
40-80 & 1.89 & 1.95 & 1.95 & 1.95\tabularnewline
\hline 
80-160 & 1.87 & 1.99 & 2.00 & 2.00\tabularnewline
\hline 
160-320 & 1.77 & 2.01 & 2.03 & 2.03\tabularnewline
\hline
\end{tabular}
\caption{Order of convergence for the $P_3$ scheme on Cartesian mesh}
\label{tabP3111}
\end{center}
\end{table}
\begin{table}[ht!]
\begin{center}
\begin{tabular}{|c|c|c|c|c|}
\hline
\multicolumn{1}{|c||}{Semi-implicit time discretization} & \multicolumn{2}{c}{~} & \multicolumn{2}{c|}{~}\\ 
\hline 
\hline 
cells/$\eps$ & $\epsilon=10^{-3}$ & $\epsilon=10^{-4}$ & $\epsilon=10^{-6}$ &  $\epsilon=10^{-7}$\tabularnewline
\hline
40-80 & 1.93 & 1.99 & 2.00 & 2.00\tabularnewline
\hline
80-160 & 1.89 & 2.01 & 2.02 & 2.03\tabularnewline
\hline 
160-320 & 1.79 & 2.02 & 2.05 & 2.05\tabularnewline
\hline
\hline
\multicolumn{1}{|c||}{Implicit time discretization} & \multicolumn{2}{c}{~} & \multicolumn{2}{c|}{~}\\ 
\hline 
\hline 
cells/$\eps$ & $\epsilon=10^{-3}$ & $\epsilon=10^{-4}$ & $\epsilon=10^{-6}$ &  $\epsilon=10^{-7}$\tabularnewline
\hline
40-80 & 1.89 & 1.95 & 1.95 & 1.95\tabularnewline
\hline 
80-160 & 1.86 & 1.99 & 2.00 & 2.00\tabularnewline
\hline 
160-320 & 1.77 & 2.01 & 2.03 & 2.03\tabularnewline
\hline
\end{tabular}
\caption{Order of convergence for the $P_3$ scheme on random quadrangular mesh}
\label{tabP32}
\end{center}
\end{table}
\begin{table}[ht!]
\begin{center}
\begin{tabular}{|c|c|c|c|c|}
\hline
\multicolumn{1}{|c||}{Semi-Implicit time discretization} & \multicolumn{2}{c}{~} & \multicolumn{2}{c|}{~}\\ 
\hline 
\hline
cells /$\eps$ & $\epsilon=10^{-3}$ & $\epsilon=10^{-4}$ & $\epsilon=10^{-6}$ &  $\epsilon=10^{-7}$\tabularnewline
\hline 
40-80 & 2.03 & 2.1 & 2.11 & 2.11\tabularnewline
\hline 
80-160 & 1.89 & 2.02 & 2.03 & 2.03\tabularnewline
\hline 
160-320 & 1.76 & 1.98 & 2.01 & 2.01\tabularnewline
\hline
\hline
\multicolumn{1}{|c||}{Implicit time discretization} & \multicolumn{2}{c}{~} & \multicolumn{2}{c|}{~}\\ 
\hline 
\hline 
cells/$\eps$ & $\epsilon=10^{-3}$ & $\epsilon=10^{-4}$ & $\epsilon=10^{-6}$ &  $\epsilon=10^{-7}$\tabularnewline
\hline
40-80 & 1.89 & 1.95 & 1.97 & 1.97\tabularnewline
\hline 
80-160 & 1.85 & 1.99 & 2.00 & 2.00\tabularnewline
\hline 
160-320 & 1.77 & 2.01 & 2.02 & 2.02\tabularnewline
\hline
\end{tabular}
\caption{Order of convergence for the $P_3$ scheme on "smooth" mesh}
\label{tabP33}
\end{center}
\end{table}
\begin{table}[ht!]
\begin{center}
\begin{tabular}{|c|c|c|c|c|}
\hline
\multicolumn{1}{|c||}{Semi-implicit time discretization} & \multicolumn{2}{c}{~} & \multicolumn{2}{c|}{~}\\ 
\hline 
\hline
cells/$\eps$ & $\epsilon=10^{-3}$ & $\epsilon=10^{-4}$ & $\epsilon=10^{-6}$ &  $\epsilon=10^{-7}$\tabularnewline
\hline
40-80 & 1.89 & 1.93 & 1.93 & 1.93\tabularnewline
\hline 
80-160 & 1.87 & 1.96 & 1.95 & 1.95\tabularnewline
\hline 
160-320 & 1.83 & 1.97 & 1.99 & 1.99\tabularnewline
\hline
\hline
\multicolumn{1}{|c||}{Implicit time discretization} & \multicolumn{2}{c}{~} & \multicolumn{2}{c|}{~}\\ 
\hline 
\hline 
cells/$\eps$ & $\epsilon=10^{-3}$ & $\epsilon=10^{-4}$ & $\epsilon=10^{-6}$ &  $\epsilon=10^{-7}$\tabularnewline
\hline
40-80 & 1.89 & 1.95 & 1.95 & 1.95\tabularnewline
\hline 
80-160 & 1.84 & 1.98 & 2.00 & 2.00\tabularnewline
\hline 
160-320 & 1.75 & 2.00 & 2.01 & 2.01\tabularnewline
\hline
\end{tabular}
\caption{Order of convergence for the $P_3$ scheme on Kershaw mesh}
\label{tabP34}
\end{center}
\end{table}

\begin{table}[ht!]
\begin{center}
\begin{tabular}{|c|c|c|c|c|}
\hline
\multicolumn{1}{|c||}{Semi-implicit time discretization} & \multicolumn{2}{c}{~} & \multicolumn{2}{c|}{~}\\ 
\hline 
\hline
cells /$\eps$ & $\epsilon=10^{-3}$ & $\epsilon=10^{-4}$ & $\epsilon=10^{-6}$ &  $\epsilon=10^{-7}$\tabularnewline
\hline 
40-80 & 2.03 & 2.1 & 2.06 & 2.06\tabularnewline
\hline 
80-160 & 1.95 & 2.03 & 2.04 & 2.04\tabularnewline
\hline 
160-320 & 1.85 &  2.01 & 2.01 & 2.01\tabularnewline
\hline
\hline
\multicolumn{1}{|c||}{Implicit time discretization} & \multicolumn{2}{c}{~} & \multicolumn{2}{c|}{~}\\ 
\hline 
\hline 
cells/$\eps$ & $\epsilon=10^{-3}$ & $\epsilon=10^{-4}$ & $\epsilon=10^{-6}$ &  $\epsilon=10^{-7}$\tabularnewline
\hline
40-80 & 1.93 & 1.96 & 1.95 & 1.95\tabularnewline
\hline 
80-160 & 1.87 & 1.99 & 2.00 & 2.00\tabularnewline
\hline 
160-320 & 1.80 & 2.01 & 2.03 & 2.03\tabularnewline
\hline
\end{tabular}
\caption{Order of convergence for the $P_3$ scheme on regular triangular mesh}
\label{tabP35}
\end{center}
\end{table}
\begin{table}[ht!]
\begin{center}
\begin{tabular}{|c|c|c|c|c|}
\hline
\multicolumn{1}{|c||}{Semi-implicit time discretization} & \multicolumn{2}{c}{~} & \multicolumn{2}{c|}{~}\\ 
\hline 
\hline 
cells /$\eps$ & $\epsilon=10^{-3}$ & $\epsilon=10^{-4}$ & $\epsilon=10^{-6}$ &  $\epsilon=10^{-7}$\tabularnewline
\hline
40-80 & 1.95 & 1.97 & 1.98 & 1.98\tabularnewline
\hline 
80-160 & 1.89 & 1.99 & 2.01 & 2.01\tabularnewline
\hline 
160-320 & 1.81 & 2.00 & 2.02 & 2.02\tabularnewline
\hline
\hline
\multicolumn{1}{|c||}{Implicit time discretization} & \multicolumn{2}{c}{~} & \multicolumn{2}{c|}{~}\\ 
\hline 
\hline 
cells/$\eps$ & $\epsilon=10^{-3}$ & $\epsilon=10^{-4}$ & $\epsilon=10^{-6}$ &  $\epsilon=10^{-7}$\tabularnewline
\hline
40-80 & 1.91 & 1.95 & 1.95 & 1.95\tabularnewline
\hline 
80-160 & 1.84 & 1.99 & 2.00 & 2.00\tabularnewline
\hline 
160-320 & 1.78 & 2.01 & 2.03 & 2.03\tabularnewline
\hline
\end{tabular}
\caption{Order of convergence for the $P_3$ scheme on random triangular mesh}
\label{tabP36}
\end{center}
\end{table}
The tables (\ref{tabP3111})-(\ref{tabP32})-(\ref{tabP34})-(\ref{tabP35})-(\ref{tabP36}) give the order of convergence for some meshes and some values of $\eps$. The remarks introduced on the convergence results of the asymptotic preserving scheme for the $S_2$ model are valid for this test case.

\subsubsection{Fundamental solution for $P_3$ and $P_1$ models}
Now we solve the $P_3$ and $P_1$ systems with $U_1(t=0)=\delta_{(1,1)}$ and $U_i(t=0)=0$ for $i\neq1$ \cite{cemracs}. The "diffusive" part is approximated with the JL-(b) nodal scheme. The time discretization is implicit. This test case is described in \cite{Pnpos}. The final time is $T=1$.
\begin{figure}
\begin{center}
\includegraphics[scale=0.6]{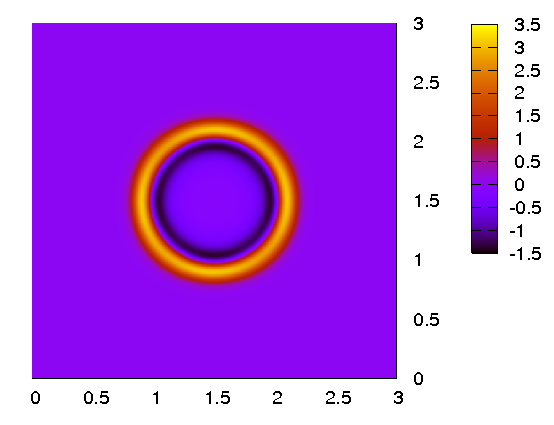}\includegraphics[scale=0.6]{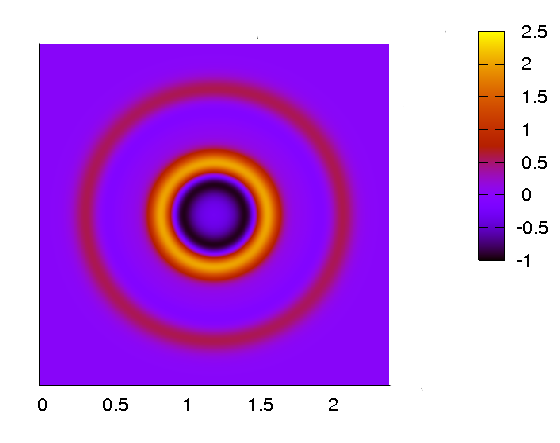}
\end{center}
  \caption{Left the first moment $\rho$ of the solution fundamental for the $P_1$ model at the time $T_f=1$, right $\rho$ for the fundamental solution for the $P_3$ model at the time $T_f=1$}\label{pnt2}
\end{figure}
The exact solution is composed of Dirac functions with the velocities $\lambda_i$ ($\lambda_i$ are the eigenvalues of $A_1n^x+A_2n^y$) and smooth functions between the Dirac functions. At the beginning the smooth functions are non negatives and becomes negatives for large time. For the $P_1$ system, the speed wave is $\frac{1}{\sqrt{3}}$ and for the $P_3$ system the maximal velocity is approximately $0.86$. The numerical results reproduce this behavior, see figure \ref{pnt2}. 

\subsubsection{Lattice problem for $P_3$ and $P_1$ models}
This test case is an example of a complicated geometry. We consider a checker-board with different scattering and absorbing opacities on a lattice core (see \cite{Dn}). It is interesting for neutron transport simulations since is a simplification to a reactor core. The geometry is given in the figure \ref{geometry}.
\begin{figure}[ht!]
\begin{center}
\includegraphics[scale=0.6]{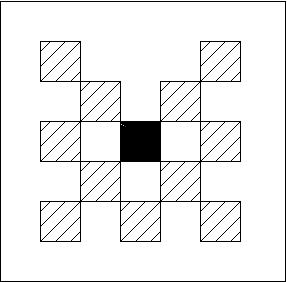}
\end{center}
  \caption{Geometry for the test case. the domain is $\left[0,7\right]\times\left[0,7\right]$}
\label{geometry}
\end{figure}
We define $\sigma$ the scattering opacity and $\sigma_a$ the absorption opacity.
In the black square and the striped squares $\sigma_a=10$ and $\sigma=0$. In the white squares $\sigma=1$ and $\sigma_a=0$. The relaxation parameter is defined $\eps=1$ in the whole domain. All the unknowns are equal to zero at the time $0$. 
We solve the $P_1$ and $P_3$ systems with the additional source term 
$$
\dt \U+\frac{1}{\eps}A\dx\U+\frac{1}{\eps}B\dx\U=-\frac{\sigma}{\eps^2}R\U+ S
$$
where $A_1$, $A_2$ and $R$ are the matrices associated to the $P_1$ or $P_3$ system and $S_i=-(\sigma_a U_i+Q)\delta_{i1}$ with $\delta_{i1}$ a Kronecker product.
The source $Q=1$ in the black square and $Q=0$ in the rest of the domain.\\
 The $P_3$ systems is solved using the JL-(b) scheme for the "diffusive" part. We plot the first moment with a logarithmic scale $log_{10}$ at the final time $T_f=3.2$.
\begin{figure}[ht!]
\begin{center}
\includegraphics[scale=0.6]{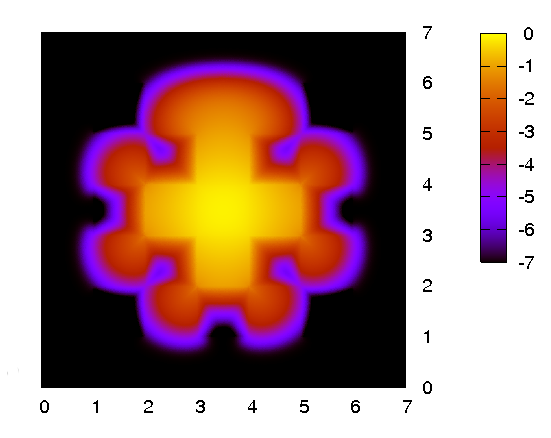}\includegraphics[scale=0.6]{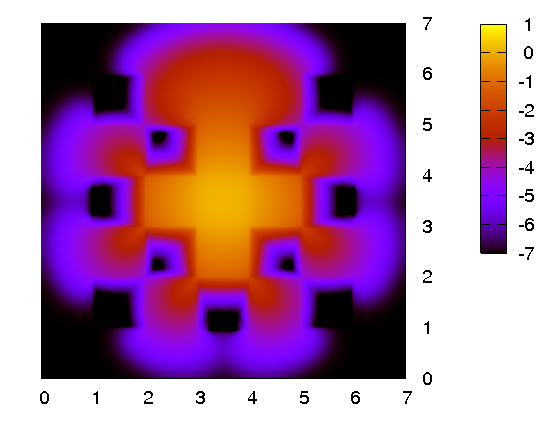}
\end{center}
  \caption{In the left, we solve the $P_1$ model and plot the $\log_{10}$ of the first moment $\rho$. In the right, we solve the $P_3$ model and plot the $\log_{10}$ of $\rho$.}\label{fond11}
\end{figure}
The results for  $P_1$ and  $P_3$ are given for the first moment in figures \ref{fond11}. They are the same  as  those in \cite{Dn}-\cite{brun}.
\section{Conclusion}
We have studied the discretization on distorted meshes of linear hyperbolic systems with stiff source. We have proposed a method called "diffusive - non diffusive" decomposition which consists to split the hyperbolic system between the hyperbolic heat equation and a other system which is negligible in the diffusion regime. Using an asymptotic preserving scheme for the hyperbolic heat equation to discretize the "diffusive " part and a classical scheme to discretize the other part, we obtain an asymptotic preserving method for the complete system.
For the approximation of transport equation, we use this decomposition for the simplified models as $P_N$ or $S_N$ approximations.
For the $P_N$ systems the decomposition is natural. Since the first and second moments gives the limit regime.
The others moments are close to $\eps$. The high order moments are added only to obtain a better approximation in the pure transport regime ($\sigma=0$).
For the $S_N$ models, we remark that the diagonalized model admits a structure very close to the structure of the $P_N$ models. 
The "diffusive - non diffusive" decomposition gives consistent schemes for all the regimes. If the numerical methods used to discretize the different parts of the decomposition are stable in norm $L^2$, the method is stable in norm $L^2$. Modifying the schemes for the "non diffusive" part we can obtain a semi-implicit scheme with a CFL condition independent of $\eps$.
However this method is not optimal for the discretization of $S_N$ models, since our numerical method does not preserve the positivity. In the future, it would be interesting to design positive and asymptotic preserving method.

\end{document}